\algnewcommand\algorithmicinput{\textbf{Input:}}
\algnewcommand\INPUT{\item[\algorithmicinput]}
\algnewcommand\algorithmicoutput{\textbf{Output:}}
\algnewcommand\OUTPUT{\item[\algorithmicoutput]}
\newenvironment{breakablealgorithm}
  {
   \begin{center}
     \refstepcounter{algorithm}
     \hrule height.8pt depth0pt \kern2pt
     \renewcommand{\caption}[2][\relax]{
       {\raggedright\textbf{\ALG@name~\thealgorithm} ##2\par}%
       \ifx\relax##1\relax 
         \addcontentsline{loa}{algorithm}{\protect\numberline{\thealgorithm}##2}%
       \else 
         \addcontentsline{loa}{algorithm}{\protect\numberline{\thealgorithm}##1}%
       \fi
       \kern2pt\hrule\kern2pt
     }
  }{
     \kern2pt\hrule\relax
   \end{center}
  }
\def\ps@pprintTitle{%
  \let\@oddhead\@empty
  \let\@evenhead\@empty
  \def\@oddfoot{\reset@font\hfil\thepage\hfil}
  \let\@evenfoot\@oddfoot
}
\numberwithin{equation}{section}
\newcounter{rowno}
\newcommand{\phikthree}{\varphi_{k_3}}
\newcommand{\phinull}{\varphi_{0}}
\newcommand{\fkthree}{f_{k_3}}
\newcommand{\fknull}{f_{0}}
\newcommand{\bessel}{{\bf K_0}}
\newcommand{\mi}{\texttt{m}}
\renewcommand{\ni}{\texttt{n}}
\newcommand{\subR}{{_R}}
\newcommand{\wt}{\widetilde}
\newcommand{\wh}{\widehat}
\newcommand{\bx}{\textbf{x}}
\newcommand{\bk}{\textbf{k}}
\newcommand{\bka}{\textbf{$\boldsymbol\kappa$}}
\newcommand{\br}{\textbf{r}}
\newcommand{\bp}{\textbf{p}}
\newcommand{\bn}{\textbf{n}}
\newcommand{\bs}{\textbf{s}}
\newcommand{\erfc}{\text{erfc}}
\newcommand{\ii}{\textrm{i}}
\newcommand{\im}{\mathsf{Im}}
\newcommand{\res}{\mathsf{Res}}
\newcommand{\C}{\textsf{C}}
\newcommand{\red}{\color{red}}
\newcommand{\blue}{\color{blue}}
\newcommand{\order}[1]{{\cal O}(#1)}
\newcommand{\n}{\mathtt{n}}
\newcommand{\rtwo}{\mathbb{R}^2}
\newcommand{\e}{{\mathcal{E}}}
\newcommand{\E}{\text{E$_1$}}
\newcommand{\F}{\mathrm{F}}
\newcommand{\R}{\mathrm{R}}
\renewcommand{\P}{\mathrm{P}}
\renewcommand{\c}{\mathrm{c}}
\newcommand{\nl}{n_{\mathrm{l}}}
\renewcommand{\sl}{s_{\mathrm{l}}}
\def\reals{{{\rm l} \kern-.15em {\rm R}}}
\newcommand{\subindex}[2]{{\begin{subarray}{l} {#1} \\ {#2}\end{subarray}}}
\newtheorem{theorem}{Theorem}
\theoremstyle{definition}
\newtheorem{definition}{Definition}
\newtheorem{remark}{Remark}
\newtheorem{example}{Example}
\newcommand{\etal}{\textit{et al.\ }}
\newcounter{subeqn} \renewcommand{\thesubeqn}{\theequation\alph{subeqn}}%
\newcommand{\subeqn}{%
  \refstepcounter{subeqn}
  \tag{\thesubeqn}
}
\begin{document}
\begin{frontmatter}
\title{The Spectral Ewald method for singly periodic domains}

\author{Davoud Saffar Shamshirgar} 
\ead{davoudss@kth.se}

\author{Anna-Karin Tornberg}
\ead{akto@kth.se}

\address{KTH Mathematics, Swedish e-Science
  Research Centre, 100 44
  Stockholm, Sweden.}



\begin{abstract}
We present a fast and spectrally accurate method for efficient computation of the three dimensional Coulomb potential with periodicity in one direction. The algorithm is FFT-based and uses the so-called Ewald decomposition, which is naturally most efficient for the triply periodic case. In this paper, we show how to extend the triply periodic Spectral Ewald method to the singly periodic case, such that the cost of computing the singly periodic potential is only marginally larger than the cost of computing the potential for the corresponding triply periodic system.

In the Fourier space contribution of the Ewald decomposition, a Fourier series is obtained in the periodic direction with a Fourier integral over the non periodic directions for each discrete wave number. We show that upsampling to resolve the integral is only needed for modes with small wave numbers. For the zero wave number, this Fourier integral has a singularity. For this mode, we effectively need to solve a free-space Poisson equation in two dimensions. A very recent idea by Vico \etal makes it possible to use FFTs to solve this problem, allowing us to unify the treatment of all modes. An adaptive 3D FFT can be established to apply different upsampling rates locally. The computational cost for other parts of the algorithm is essentially unchanged as compared to the triply periodic case, in total yielding only a small increase in both computational cost and memory usage for this singly periodic case.
\end{abstract}
\begin{keyword}
Fast Ewald summation \sep Fast Fourier transform \sep Single periodic \sep Coulomb potentials, Adaptive FFT, Fourier integral, Spectral accuracy
\end{keyword}

\end{frontmatter}


\section{Introduction}
\label{sec:intro}
In molecular dynamics simulations, a crucial and time consuming task is to compute the long-range interactions or particularly the electrostatic potential (or force) between charged particles. For systems that are subject to periodic boundary conditions, \textit{Ewald summation} is a technique to evaluate these interactions. For $N$ particles with charges $q_{\ni}$ at positions $\bx_{\ni}\in\mathsf{X}_{i=1}^3[0,L_i)$, $\ni=1,\ldots,N$, the electrostatic potential or briefly the potential evaluated at a target point $\bx_{\mi}$ is written as
\begin{align}
\varphi(\bx_{\mi}) = \sum_{\bp\in P_D}^\prime\sum_{\ni=1}^N\dfrac{q_{\ni}}{|\bx_{\mi}-\bx_{\ni}+\bp|},
\label{eq:coulomb}
\end{align}
where $P_D$ with $D=1,2,3$, can be modified such that it expresses the periodicity. The prime denotes that the term with $\ni=\mi$ is omitted from the sum for $\bp=\boldsymbol{0}$. To impose the periodicity in three dimensions (3d-periodicity), the simulation box is replicated in three directions and we define $P_3=\lbrace(\alpha_1L_1,\alpha_2L_2,\alpha_3L_3):\alpha_i\in\mathbb{Z}\rbrace$. In this case, in light of the neutrality condition, i.e., $\sum_{\ni=1}^Nq_{\ni}=0$, the sum in \eqref{eq:coulomb} is only conditionally convergent and therefore the order of summation has to be exactly defined. In the classical Ewald sum proposed by Ewald \cite{Ewald1921}, \eqref{eq:coulomb} is decomposed into a fast decaying part and a smooth part which is computed in Fourier space. The result is that of a spherical summation order, and the sum can be written as
\begin{align}
\varphi^{3\P}(\bx_{\mi}) =& \varphi^\R_{\mi}+\varphi^\F_{\mi}+\varphi^{\textrm{self}}_{\mi} \nonumber \\
=&\sum_{\bp\in P_3}^{\prime}\sum_{\ni=1}^Nq_{\ni}\frac{\erfc(\xi |\bx_{\mi}-\bx_{\ni}+\bp|)}{|\bx_{\mi}-\bx_{\ni}+\bp|} \nonumber \\
&+\frac{4\pi}{V}\sum_{\bk\neq0}\frac{e^{-k^2/4\xi^2}}{k^2}\sum_{\ni=1}^Nq_{\ni}e^{\ii\bk\cdot(\bx_{\mi}-\bx_{\ni})}-\frac{2\xi}{\sqrt{\pi}}q_{\mi},
\label{eq:ewald3p}
\end{align}
where $\mi=1,\ldots,N$, $\bk=2\pi(\frac{n_1}{L_1},\frac{n_2}{L_2},\frac{n_3}{L_3}),$ with $n_i\in\mathbb{Z}$ and $k=|\bk|$. The superscripts $R$, $F$, and $self$ denote the real-space, Fourier-space (here referred to as $k$-space), and self correction term, respectively. The self correction term is added in order to eliminate the self interaction contribution of the charges included due to the decomposition. Moreover, applying the spherical order of summation, the $\bk=0$ term (dipole term) of the 3d periodic Ewald sum, depends on the dielectric constant of the surrounding medium. If the medium has an infinite dielectric constant, the dipole term vanishes, cf. {\cite{Frenkel2002}. Assuming so, the $\bk=0$ terms is excluded from the sum.}

In \eqref{eq:ewald3p}, $\xi>0$ is the decomposition parameter (\textit{Ewald parameter}) which controls the decay of the terms in the real space and $k$-space sums, but does not alter the total result. Under the assumption of a uniform distribution of charges, a proper choice of $\xi$ can decrease the computational complexity of computing the potential \eqref{eq:ewald3p} at $\bx_{\mi}$, $\mi=1,\ldots,N$, from $\order{N^2}$ to $\order{N^{3/2}}$ albeit with a very large constant. This however can be reduced to $\order{N\log (N)}$ (also with a much smaller constant) using fast methods which take advantage of the Fast Fourier transform (FFT). Inspired by the Particle-Particle-Particle Mesh Ewald (P$^3$M) method by Hockney and Eastwood \cite{Hockney2010}, different fast methods have been proposed, including the Smooth Particle mesh Ewald (SPME) method \cite{Essmann1995}, and a spectrally accurate Ewald method for triply periodic (SE3P) \cite{Lindbo2011} and doubly periodic (SE2P) \cite{Lindbo2012} systems. Using the idea in \cite{Vico2016} we have developed another spectrally accurate method for the fast evaluation of sums involving free-space Green's functions \cite{Klinteberg2016}.

In the present work, we complete the framework of the spectral Ewald methods by extending the algorithm to systems with one periodic direction in three dimensions (1d- or singly periodic). This method can be used, e.g.,\ for simulation of nanopores \cite{Bourg2012} and nanotubes \cite{Brodka2006}. In one of the first attempts, Lekner summation was used to compute the long-range interactions in 1d-periodic system of particles \cite{Brodka2002,Arnold2005,Brodka2004}. The first derivation of the 1d-periodic Ewald summation Ewald1P was given by Porto \cite{Porto2000} with an integral representation of the $k$-space sum. The integral can however be evaluated to obtain the closed form of the formula, see \cite[Appendix D]{Tornberg2014} and references therein. Recently, Nestler \etal \cite{Nestler2015} developed a fast algorithm which employs non-equispaced FFTs (NFFT). To the best of our knowledge, this is the only method with $\order{N\log(N)}$ complexity for 1d-periodic problems. The approach that we have taken differs significantly from theirs, as will be commented on in Section \ref{sec:se1p}.

FFT based methods such as methods in the PME (particle mesh Ewald) family are most efficient for the triply periodic case. In this case, FFTs can be used in all directions without any oversampling. As soon as there is a non-periodic direction, the grid has to be extended in that direction. In the doubly periodic case, Arnold \etal \cite{Arnold2002c,DeJoannis2002d} devised a method where the problem is extended to full periodicity, with a larger length in the non-periodic direction, and where a correction term is applied to improve on the result. Here, the increased length in the non-periodic direction simply means a zero-padding of the FFT, increasing in the number of grid points in that direction. The SE2P method by Lindbo and Tornberg \cite{Lindbo2012} takes a different approach, which needs a ``mixed" transform; a discrete Fourier transform in the periodic variables and an approximation to the continuous Fourier integral transform in the free dimension. Also in this case the grid in the free dimension must be oversampled for an accurate approximation. 

Extending the Spectral Ewald method to the singly periodic case, there were two main challenges to overcome. Firstly, the oversampling. An oversampling by a factor of four to six makes the FFTs four to six times more expensive to compute when the oversampling is applied in one dimension. With two free dimensions, this would increase the cost by a factor of 16 to 64, which is clearly not desirable.  Oversampling needs to be done to resolve the Fourier integrals. By analysis of a similar one-dimensional integral we can understand how the error behaves and recognize that only for small discrete wave numbers (a small number of periodic modes) do the FFT grids need to be upsampled. Based on this, we have developed what we call adaptive FFTs and IFFTs (denoted by AFT and AIFT in this paper) that only upsample for a select number of discrete modes in the periodic direction. The ratio of the run time for the AFT and the FFT without oversampling decreases with grid size, as a smaller fraction of modes must be oversampled, and a typical increase in cost can be a factor of 2-3 instead.

In the derivation of the singly periodic Ewald sums, there is a term that includes the contribution from the zero wave number in the periodic direction, i.e., that depends only on the variables in the free directions. The direct evaluation of this sum at all target points would however incur an $\order{N^2}$ computational cost, and the second challenge was to significantly reduce this. One interpretation of this sum is that it is the solution to the Poisson equation in $\mathbb{R}^2$, with the right hand side convolved with specifically scaled Gaussians centered at each of the charge locations and projected onto the plane $z=0$, with $z$ the periodic direction. A very recent idea for how to solve free space problems by the means of FFTs \cite{Vico2016} can therefore be used. Hence, we are able to include this zero wave number contribution into the treatment of the full $k$-space term. This extra sum then only entails a special scaling in Fourier space for the modes with a zero wave number in the periodic direction. This is done at a negligible extra cost.

With these two main advances, we have developed a fast and spectrally accurate FFT-based method for the evaluation of the $k$-space sum in the Ewald summation formula for the singly periodic case. We will denote this method the SE1P method. 

The outline of this paper is as follows; In section 2, we present the Ewald1P formulas and provide the truncation error estimates. We introduce our fast method  for computing the electrostatic force and potential in section 3. Section 4 is devoted to a discussion regarding approximation errors including the errors introduced by discretization of the Fourier integrals, and the related issue of parameter selection including oversampling rates in the adaptive FFT. The following section is dedicated to the implementation details of the algorithm. Finally, we supply numerical results including comparisons with the 3d-periodic counterpart in section 6 and wrap up with conclusions in section 7.


\section{Singly Periodic Ewald summation, Ewald1P}
\subsection{The Ewald summation formula}
\label{sec:ewald1p}
In this section, we present the three dimensional Ewald summation formula under 1d-periodic boundary conditions. The formulas to compute \eqref{eq:coulomb} can be derived using Fourier integrals. The reader may consult \cite{Tornberg2014} for details and alternatives of the derivation. We note here that unlike the 3d-periodic case, equation \eqref{eq:coulomb} is shown to be absolutely convergent in the 1d-periodic case under the assumption of charge neutrality and is therefore independent of the summation order \cite{Arnold2005}.

\begin{figure}[tp]
\centering \includegraphics[width=0.4\linewidth]{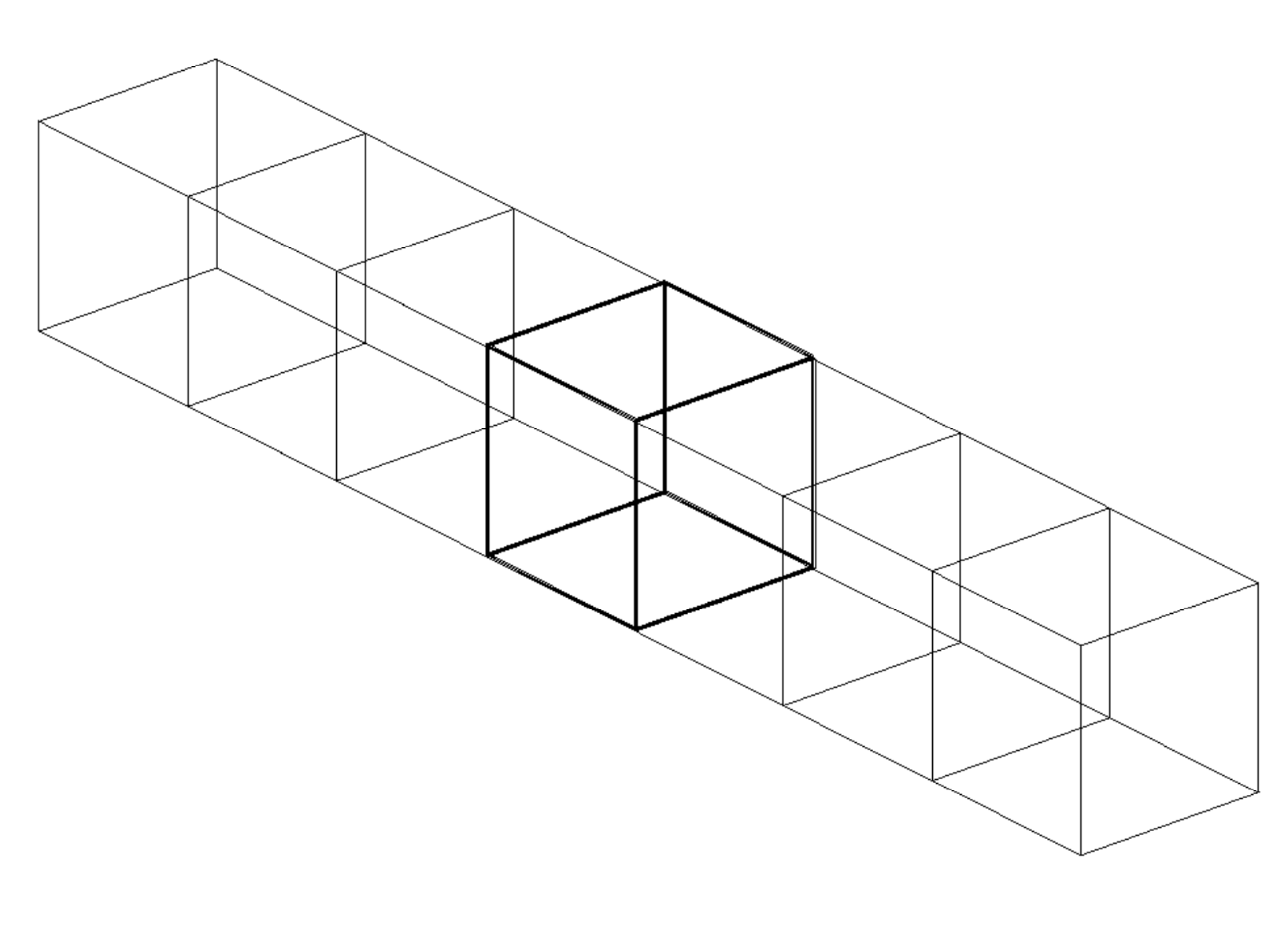}
\caption{In 1d-periodic systems, the original simulation box is replicated in 1 direction.}
\label{fig:1pcube}
\end{figure}

Henceforth we assume that the third dimension is periodic and the other two are free. The single periodicity is imposed by setting $\bp\in P_1=\lbrace(0,0,\alpha_3L_3):\alpha_3\in\mathbb{Z}\rbrace$ in \eqref{eq:coulomb}, see figure \ref{fig:1pcube}. Let $\bx=(\br,z)=(x,y,z)$ and define $\bk=(\bka,k_3)=(\kappa_1,\kappa_2,k_3)$ where $k_3\in\lbrace \frac{2\pi}{L_3}n:n\in \mathbb{Z}\rbrace$, $\kappa_{1,2}\in\mathbb{R}$ and $\kappa^2=\kappa_1^2+\kappa_2^2$. Then the Ewald1P formula to compute the potential, denoted by $\varphi$, at a source location $\bx_{\mi}$, $\mi=1,\ldots,N$, reads
\begin{align}
\varphi(\bx_{\mi})&=\varphi^\R (\bx_{\mi})+\varphi^{\F,k_3\ne0}(\bx_{\mi})+\varphi^{\F,k_3=0}(\bx_{\mi})+\varphi^{\textrm{self}}_{\mi},
\label{eq:full_potential}
\end{align}
where
\begin{align}
\varphi^{\R}(\bx_{\mi})&=\sum_{\bp\in P_1}^{\prime}\sum_{\ni=1}^Nq_{\ni}\dfrac{\erfc(\xi|\bx_{\mi}-\bx_{\ni}+\bp|)}{|\bx_{\mi}-\bx_{\ni}+\bp|}, \label{eq:ewald1p_real}\\ 
\varphi^{\F,k_3\ne0}(\bx_{\mi})&=\dfrac{1}{\pi L_3}\sum_{k_3\neq0}\sum_{\ni=1}^N q_{\ni}e^{\ii k_3(z_{\mi}-z_{\ni})}\int_{\mathbb{R}^2} \frac{e^{-(\kappa^2+k_3^2)/4\xi^2}}{\kappa^2+k_3^2}e^{\ii\bka\cdot(\br_{\mi}-\br_{\ni})}\dif\bka \refstepcounter{equation}\subeqn \label{eq:ewald1p_fourier_integral}\\
&=\frac{1}{L_3}\sum_{k_3\neq0}\sum_{\n=1}^Nq_{\ni}e^{\ii k_3(z_{\mi}-z_{\ni})}{\bf K_0}(k_3^2/4\xi^2,\rho_{\mi\ni}^2\xi^2),\label{eq:ewald1p_fourier_complete} \subeqn\\
\varphi^{\F,k_3=0}(\bx_{\mi})&=-\dfrac{1}{L_3}\sum_\subindex{\ni=1}{\ni\neq \mi}^Nq_{\ni}\lbrace \gamma+\log(\xi^2\rho_{\mi\ni}^2)+\E(\xi^2\rho_{\mi\ni}^2)\rbrace, \label{eq:ewald1p_zero}\\
\varphi^{\textrm{self}}_{\mi}&=-\frac{2\xi}{\sqrt{\pi}}q_{\mi},
\label{eq:ewald1p_self}
\end{align}
in which $\rho_{\mi\ni}=|\br_{\mi}-\br_{\ni}|$, $\gamma=0.5772156649\ldots$ is the Euler-Mascheroni constant and $\bf K_0(\cdot,\cdot)$ is the incomplete modified Bessel function of the second kind defined as
\begin{align*}
{\bf K_0}(a,b) = \int_1^\infty\dfrac{\dif t}{t}e^{-at-b/t} = \int_0^1\dfrac{\dif t}{t}e^{-a/t-bt}.
\end{align*}
Also $\E(\cdot)$ is the exponential integral and is defined as \cite[Sec 6.3]{Press1987},
\begin{align*}
\E(x) = \int_1^{\infty}\dfrac{e^{-xt}}{t}\dif t, \quad x>0.
\end{align*}
The self interaction terms in \eqref{eq:ewald1p_self} and \eqref{eq:ewald3p} (the last terms) are identical. 
Except for the number of periodic dimensions, the real space sums in \eqref{eq:ewald1p_real} and \eqref{eq:ewald3p} (the first terms) are also the same and therefore, can be evaluated similarly by truncating the infinite sums. The $k$-space sums are different in the way that in the 1d-periodic case there is no longer a discrete summation in all three directions. In the two free directions, there is now an inverse Fourier transform instead. These integrals can be evaluated analytically, with the result in \eqref{eq:ewald1p_fourier_complete}. It is however the original form \eqref{eq:ewald1p_fourier_integral} that will be the basis for the fast method that we present. 

The third term \eqref{eq:ewald1p_zero} has no correspondence in the triply periodic case. It is the $k_3=0$ term that has been separated out from the sum over $k_3$ in \eqref{eq:ewald1p_fourier_integral}. If it were to be summed directly, it would also have an $\order{N^2}$ complexity. In the fast method, we will be able to include this term in the FFT treatment of \eqref{eq:ewald1p_fourier_integral}, using ideas from  \cite{Vico2016}, see section \ref{subsec:kspace}.

Equation \eqref{eq:ewald1p_zero} appears to have a singularity at $\rho_{\mi\ni}=0$, i.e., $\br_{\mi}=\br_{\ni}$. However one can show that $\log(x)+\E(x)+\gamma\to0$ as $x\to 0$, and therefore the $\ni=\mi$ term can simply be excluded from the sum. Moreover, due to the charge neutrality condition
\begin{align*}
\lim_{|\br|\to\infty}\sum_{\ni=1}^Nq_{\ni}\log(\xi^2\rho_{\ni}^2)=0,
\end{align*}
where $\rho_{\ni}=|\br-\br_{\ni}|$, see \cite[Appendix E]{Tornberg2014}. Using the definition of the exponential integral, we also have $\lim_{x\to\infty} \E(x)=0$. Thus $\varphi^{\F,k_3=0}$ is bounded everywhere.

The direct sums in $k$-space, \eqref{eq:ewald1p_fourier_complete} and \eqref{eq:ewald1p_zero}, will only be evaluated to provide reference solutions for the fast method. We briefly discuss the evaluation of ${\bf K_0}(\cdot,\cdot)$ and $\E(\cdot)$ in \ref{subsec:Incomp_bessel}.


\subsection{Truncation errors of the Ewald sums}
\label{sec:truncation_error}
The real \eqref{eq:ewald1p_real} and $k$-space \eqref{eq:ewald1p_fourier_integral}-\eqref{eq:ewald1p_fourier_complete} parts of the Ewald sum are infinite sums and will incur truncation errors when approximated by finite sums. Henceforth we denote by $r_\c\in\mathbb{R}^+$ a cut-off radius such that when evaluating $\varphi^\R(\bx_{\mi})$, we include only charges whose positions satisfy $|\bx_{\mi}-\bx_{\ni}+\bp|\leq r_\c$ as we sum over $\bp\in P_1$. The $k$-space sum is truncated at some maximum wave number $k_{\infty}$ such that $|\bk|\leq \frac{2\pi}{L}k_{\infty}$. Throughout this article, for simplicity we assume $L_1=L_2=L_3=L$ unless it is specified otherwise.

Similarly to what was concluded in \cite{Lindbo2012} for the doubly periodic case, in the 1d-periodic Spectral Ewald method the truncation error estimates given in \cite{Kolafa1992} for 3d-periodic problems are still valid. This is due to the fact that regardless of the periodicity, we expect the truncation errors to be the same in each direction. To measure the error, the following \textit{root mean square} error is used
\begin{align*}
\e_\textrm{rms} := \left( \dfrac{1}{N}\sum_{\ni=1}^N \left(\Delta\varphi(\bx_{\ni})\right)^2\right)^{1/2},\quad \Delta\varphi(\bx_{\ni})=(\varphi-\varphi^\ast)(\bx_{\ni}).
\end{align*}
The reference solution $\varphi^\ast$ is computed using a very well converged approximate solution or the direct sum \eqref{eq:ewald1p_fourier_complete}, with ${\bf K_0}$ computed as in \ref{subsec:Incomp_bessel} with high precision. 

The real and $k$-space truncation error estimates to compute the potential \eqref{eq:coulomb} using Ewald summation formula respectively read \cite{Kolafa1992},
\begin{align}
\e_\textrm{rms}^\R&\approx\sqrt{\dfrac{Qr_\c}{2L^3}}(\xi r_\c)^{-2}e^{-\xi^2 r_\c^2}, \label{eq:trunc_error_estimate_real} \\
\e_\textrm{rms}^\F&\approx\xi\pi^{-2}k_{\infty}^{-3/2}\sqrt{Q}e^{-(\pi k_{\infty}/\xi L)^2},
\label{eq:trunc_error_estimate_fourier}
\end{align}
where $Q:=\sum_{\ni=1}^Nq_{\ni}^2$. To assess the accuracy of the estimates above, we consider a system of $N=100$ randomly distributed particles with $Q=\order{1}$ in a cubic box of size $L=2$. In figure \ref{fig:kolafa_perram} we plot the absolute rms error in the computation of the real (left) and Fourier (right) space parts of the 1d-periodic Ewald sum for this system. Referring to these plots we find an excellent agreement between the measured errors and the error estimates. 

Assume $\xi$ to be given and that the truncation level is set to $\e=\e_\textrm{rms}^\R=\e_\textrm{rms}^\F$. By inverting the error formulas in \eqref{eq:trunc_error_estimate_real}-\eqref{eq:trunc_error_estimate_fourier}, we can compute the Ewald sum parameters, $r_\c$ and $k_{\infty}$,
\begin{align}
r_\c &\approx \dfrac{1}{2\xi}\left[ 3W\left(\dfrac{4}{3}C^{2/3}\right) \right]^{1/2}, \quad\qquad C=\dfrac{Q}{2L^3\xi\e^2}, \label{eq:rc} \\
k_{\infty} &\approx \frac{\sqrt{3}L\xi}{2\pi}\left[ W\left( \dfrac{D}{(\xi\e^2)^{2/3}} \right)\right]^{1/2}, ~~D = \dfrac{4}{3L^ 2}\left(\dfrac{Q}{\pi}\right)^{2/3}, \label{eq:kinf}
\end{align}
where we denote by $W(\cdot)$, the \textsf{LambertW} function, the inverse of $f(x)=xe^x$. 
\begin{figure}[htbp]
  \begin{subfigure}[b]{0.49\textwidth}
    \centering 
    \includegraphics[width=\textwidth,height=.8\textwidth]{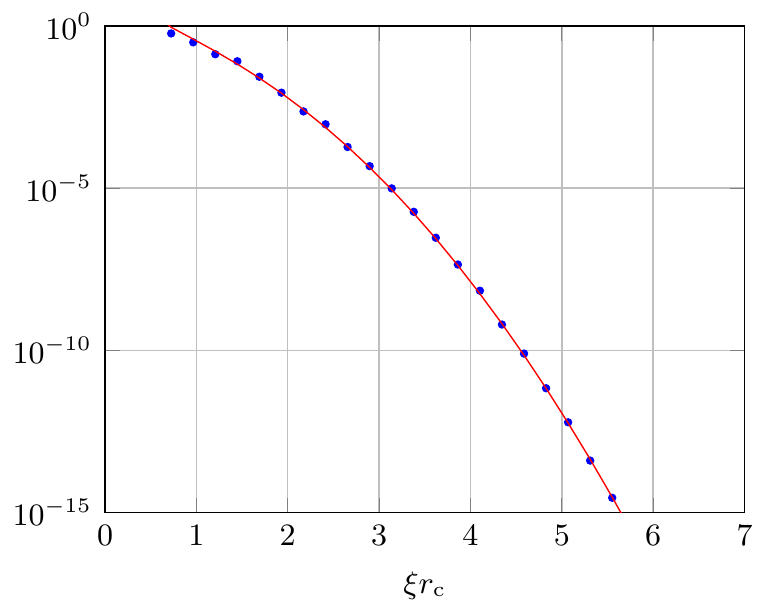}
  \end{subfigure}
  \begin{subfigure}[b]{0.49\textwidth}
    \centering 
    \includegraphics[width=\textwidth,height=.8\textwidth]{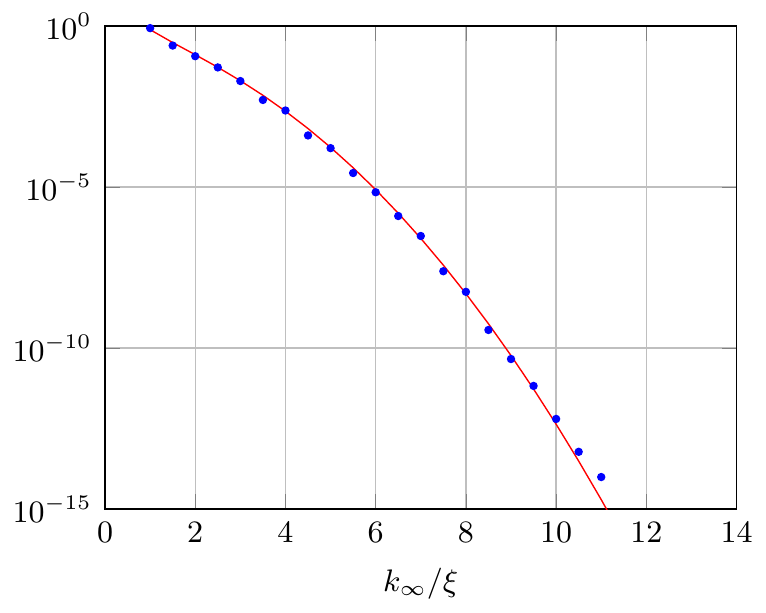}
  \end{subfigure}
  \caption{(Left) RMS of absolute truncation errors in real space with $\xi=7$ and $r_\mathrm{c}\in(0,1)$. (Right) RMS of absolute truncation errors in Fourier space with $\xi=3.14$ and $k_{\infty}\in[2,30]$. A system of $N=100$ randomly distributed particles in a cubic box of size $L=2$ is used. Dots are measured values and solid lines are computed using the estimates \eqref{eq:trunc_error_estimate_real} and \eqref{eq:trunc_error_estimate_fourier}.}
  \label{fig:kolafa_perram}
\end{figure}


\section{Introducing a fast method}
\label{sec:se1p}
According to the discussion in the previous section, contributions to the real space sum will be ignored if the distance between the source location and the target evaluation point is larger than a cut-off radius $r_\c$. Typically, a linked cell list \cite{Allen1989, Hockney2010} or a Verlet list algorithm \cite{Allen1989,Verlet1967} can be used to efficiently obtain a list of nearest neighbors. The real space sum in \eqref{eq:ewald1p_real} includes a summation over the one periodic dimension, which means that contribution from periodic images of the sources are also included if they are within this distance. This is similar to the triply periodic case, where periodic distances must be checked in all three directions. 
Hence, any efficient implementation for the triply periodic real space sum can simply be modified to instead compute the singly periodic real space sum. We will therefore not discuss the evaluation of the real space sum further in this paper. 

In the reminder of this section, we will derive the formulas and introduce the approximations needed to build a method for the rapid and accurate evaluation of the two terms \eqref{eq:ewald1p_fourier_integral} and \eqref{eq:ewald1p_zero} by FFTs. We will start by schematically describing the algorithm, later filling in the details.


\subsection{The k-space algorithm}
\label{subsec:kspacealg}

In this section, we will introduce the main steps of the $k$-space algorithm to convey its structure.
Derivations of formulas, discussion of parameter choices and associated errors will follow. 

Let us denote 
\begin{align}
\varphi^\F(\bx)=\varphi^{\F,k_3\ne0}(\bx)+\varphi^{\F,k_3=0}(\bx),
\label{eq:sum_of_phis}
\end{align}
where $\varphi^{\F,k_3\ne0}$ and $\varphi^{\F,k_3=0}$ are given in \eqref{eq:ewald1p_fourier_integral} and \eqref{eq:ewald1p_zero}, respectively. The objective is to evaluate $\varphi^\F(\bx)$ 
at given locations (target points) accurately and efficiently in such a way that the computation of the $\varphi^{\F,k_3\ne0}(\bx)$ and $\varphi^{\F,k_3=0}(\bx)$ terms are unified. 

For simplicity of description, assume that $L_1=L_2=L_3=L$. 
We first introduce a uniform grid of size $M^3$ on $[0,L)^3$ with grid size $h=L/M$. In any method of Particle Mesh Ewald (PME) type, one would start by spreading point charges to the grid by interpolation. 
As will be discussed later, in the SE method we use a suitably scaled and truncated Gaussian as this ``interpolation'', ``spreading'' or ``window'' function. 
The charge locations are all in the domain $[0,L)^3$. The domain length in the free directions must be extended to also contain the support of a truncated Gaussian centered around any possible charge location. In the periodic direction, they will instead be wrapped around periodically. With a support of the truncated Gaussian of $P^3$ points, 
denote the extended domain length by $\tilde{L}=L+Ph$ and the number of grid points in the free directions by $\tilde{M}=M+P$ s.t. $h=L/M=\tilde{L}/\tilde{M}$. 

Applying the FFT to any function defined on this grid would yield the Fourier coefficients for the $k$-space vectors $(2\pi n_1/\tilde{L}, 2\pi n_2/\tilde{L}, 2\pi n_3/L)$ with $n_i=-\tilde{M}/2,\ldots,\tilde{M}/2-1$, $i=1,2$ and $n_3=-M/2,\ldots,M/2-1$.  Hence, the maximum absolute value of each component will be the same. The problem is however only periodic in the $z$ coordinate, and \eqref{eq:ewald1p_fourier_integral} contains a discrete sum over $k_3$ and an integral over $\kappa_1$ and $\kappa_2$. When we discretize the integral, we will obtain discrete sums also over $\kappa_1$ and $\kappa_2$. As will be discussed later, we will however need a finer resolution in the $\kappa_1$ and $\kappa_2$ directions for some $k_3$ modes to obtain an accurate approximation of the integral.  A finer resolution in $k$-space is achieved by zero-padding in real space. That is, if we extend the grid from $\tilde{M}$ to $s\tilde{M}$ points in one direction, the spacing in $k$-space decreases by a factor $s$ to $2\pi/(s\tilde{L})$ while the maximum magnitude of the $k$-modes stays the same. The parameter $s$ is called the \textit{upsampling factor}.

Given an integer $\nl$, we can define the following \textit{local pad} set 
\begin{align}
\mathbb{I}:=\lbrace k_3:  0 < |k_3|\leq  \frac{2\pi}{L} \nl \rbrace,
\label{eq:I}
\end{align}
where $\nl \ll k_{\infty}$. 
Note that $|\mathbb{I}|=2\nl$.
For the rest of the  non zero $k_3$ modes, we define
\begin{equation}
\mathbb{J}=\lbrace k_3:  \frac{2\pi}{L} \nl  <|k_3| \leq  \frac{2\pi}{L} k_{\infty}\rbrace.
\label{eq:J}
\end{equation}
Given upsampling rates $s_0$ and $\sl $,
we indicate by AFT the \textit{adaptive Fourier transform} that computes the Fourier transform $(x,y) \rightarrow (\kappa_1,\kappa_2)$ for $k_3=0$ with an oversampling factor $s_0$ and a resulting $\kappa$-spacing of $2\pi/(s_0 \tilde{L})$, for all $k_3 \in \mathbb{I}$ with an oversampling factor $\sl $ and a resulting $\kappa$-spacing of $2\pi/(\sl  \tilde{L})$, and for the remaining $k_3$ modes ($k_3 \in \mathbb{J}$) without oversampling which yields the basic $\kappa$-spacing $2\pi/\tilde{L}$, i.e.\ we define the adaptive upsampling factor 
\begin{align}
s=\left\lbrace
\begin{array}{cl}
s_0, & k_3=0, \\
\vspace{-.3cm}\\
\sl , & |k_3|\leq\frac{2\pi}{L}\nl, \\
\vspace{-.3cm}\\
1, & |k_3|>\frac{2\pi}{L}\nl.
\end{array}
\right.
\label{eq:upsamplings}
\end{align}
The implementation of the AFT will be discussed in section \ref{sec:implementation}. Note that if $s_0=\sl $ and $\nl=k_\infty$, then all modes are oversampled with a rate $s=s_0=\sl $ and we are back at the plain upsampled FFT.

We have yet to discuss the different parameters. 
The choice of $M$ is related to the truncation error estimate for the Ewald $k$-space sum
\eqref{eq:trunc_error_estimate_fourier}, with $M/2=k_{\infty}$, and will depend on the decomposition parameter $\xi$ and the error tolerance. The approximation errors that arise from the introduction of the fast method and the selection of remaining parameters will be discussed in section \ref{sect:approxerr_parsel}. 
The truncated Gaussians will be scaled to minimize the approximation error given the number of grid points $P$ in the support across the Gaussian, such that $P$ is the only parameter to select. 

We state the full algorithm below in Algorithm \ref{alg:se1p}. 
{
\small
\begin{breakablealgorithm}
\caption{1d-periodic spectral Ewald (SE1P) method - Fourier space part}
\small
\label{alg:se1p}
\begin{algorithmic}[1]
\INPUT Charge locations $\bx_{\ni}\in[0,L)^3$ and charges $q_{\ni}$, $\ni=1,\ldots,N$, splitting parameter $\xi$, grid size $M$, oversampling factors $s_0, \sl $, maximum oversampled wavenumber $\nl$, number of points $P$ in support of truncated Gaussians.
\State Set $h=L/M$, $\tilde{L}=L+Ph$, $\tilde{M}=M+P$. Compute
\begin{align}
\eta=\frac{P\xi^2 h^2}{c^2 \pi},
\label{eq:comp_eta}
\end{align}
where $c=0.95$.
\State 
Introduce a uniform grid on $[0,L)\times [0,\tilde{L}) \times [0,\tilde{L})$ with $M \times \tilde{M} \times \tilde{M}$ points. Evaluate $H$ on this grid according to
\begin{align}
H(\br,z) = \left(\dfrac{2\xi^2}{\pi\eta}\right)^{3/2}\sum_{\ni=1}^Nq_{\ni}e^{-2\xi^2|\br-\br_{\ni}|^2/\eta}e^{-2\xi^2(z-z_{\ni})_{\ast}^2/\eta},
\label{eq:spread_1p}
\end{align}
where $(\cdot)_{\ast}$ denotes the closest periodic distance in the $z$ direction and where the Gaussian is truncated outside of a support of $P^3$ points centered around $(\br_{\ni}, z_{\ni})$.
\State Apply an AFT with $s_0$, $\sl $ and $\nl$ to compute $\wh{H}(\kappa_1,\kappa_2,k_3)$. 
\State Scale $\wh{H}$ to obtain $\wh{\wt{H}}$ 
\begin{align}
\wh{\wt{H}}(\kappa_1,\kappa_2,k_3):=
e^{-(1-\eta)(\kappa^2+k_3^2)/4\xi^2} \wh{G}(\kappa,k_3) \wh{H}(\kappa_1,\kappa_2,k_3),
\label{eq:scale_1p}
\end{align}
where $\kappa^2=\kappa_1^2+\kappa_2^2$ and  
\begin{align}
\wh{G}(\kappa,k_3) = \left\lbrace
\begin{array}{ll}
\dfrac{1}{\kappa^2+k_3^2},& k_3\neq0,\\
\vspace{-.25cm}\\
\dfrac{1-J_0(R\kappa)}{\kappa^2}-\dfrac{R\log(R)J_1(R\kappa)}{\kappa},& k_3=0, \kappa\neq0,\\
\vspace{-.25cm}\\
\dfrac{R^2}{4}(1-2\log(R)),& k_3=0, \kappa=0,\\
\end{array}
\right. 
\label{eq:G_kappa_k}
\end{align}
and $R=\sqrt{2} \tilde{L}$.
\State Apply an inverse AFT to $\wh{\wt{H}}$ to obtain $\wt{H}$ on the grid of size 
$M \times \tilde{M} \times \tilde{M}$.
\State Compute the potential at target points (same as charge locations)
\begin{align}
\varphi^\F(\br_\ni,z_\ni) &\approx \dfrac{2}{L_3}\left(\frac{2\xi^2}{\pi\eta}\right)^{\frac{3}{2}}h^3\sum_{n,m,l}\wt{H}(nh,mh,lh)e^{-2\xi^2|\br_\ni-(nh,mh)|^2/\eta}e^{-2\xi^2(z_\ni-lh)_{\ast}^2/\eta}.
\label{eq:se1p_complete_discretized}
\end{align}
As in step 2, the Gaussians are truncated outside of a support of $P^3$ points.
\OUTPUT Potentials $\varphi^\F(\bx_{\ni})=\varphi^{\F,k_3\ne0}(\bx_{\ni})+\varphi^{\F,k_3=0}(\bx_{\ni})$, $\ni=1,\ldots,N$.
\end{algorithmic}
\end{breakablealgorithm}
}
Step 2 in Algorithm \ref{alg:se1p} is referred to as the \textit{gridding} step, when truncated Gaussians centered at the charge locations are evaluated on a uniform grid. Step 5, when grid values are known and the same truncated Gaussians are used to interpolate to the target locations, is referred to as the \textit{gathering} step. For both these steps, we use Fast Gaussian Gridding (FGG) to obtain an efficient implementation, as will be discussed in section \ref{sec:implementation}.

The factor $e^{-(1-\eta)(\kappa^2+k_3^2)/4\xi^2} $ in \eqref{eq:scale_1p} (the \textit{scaling} step) depends on the specific choice of the Gaussian as the window function. 
The definition of $\wh{G}(\kappa,k_3)$ in \eqref{eq:G_kappa_k} is a key new component of this work and is based on a very recent method for how to solve free space problems with FFTs \cite{Vico2016}. This term will be derived in section \ref{subsec:kspace}. As we can clearly see at this point, this approach really unifies the treatment of the $k_3=0$ mode with the modes for $k_3 \ne 0$. All that is needed is a different scaling for this mode, together with a specific choice of oversampling factor $s_0$. 

The insight that only some $k_3$ modes need an upsampled 2D FFT (as will be discussed in section \ref{sec:spectral_accuracy}), and implementation of the adaptive FFT (AFT) to utilize this fact
(section \ref{sec:implementation}), further really enhances the efficiency of the method.

The recent FFT-based method by Nestler \etal \cite{Nestler2015} uses a different approach. The main idea is to start directly from  \eqref{eq:ewald1p_fourier_complete} and approximate 
\begin{align*} 
B(k_3,\br_{\mi},\br_{\ni}) = \bessel(k_3^2/4\xi^2,|\br_{\mi}-\br_{\ni}|^2\xi^2),
\end{align*}
using Fourier series. The functions $B(k_3,\cdot,\cdot)$ are smooth but not periodic in the $x$ and $y$ directions. The simulation box is doubled in both directions and $B(k_3,\cdot,\cdot)$ is truncated on the extended interval. However, this extension is not adequate for the Fourier approximation to be done since the Fourier coefficients do not decay sufficiently fast. To enforce periodicity and some degree of smoothness, the simulation box is extended more and a smooth transition is constructed on the resulting gap using polynomials of degree $2p-2$, where $p$ is the degree of smoothness. Applying a similar strategy on the zero mode \eqref{eq:ewald1p_zero}, a unified regularization is obtained. Moreover, by construction, the regularization functions are periodic and $C^{p-1}$ in both $x$ and $y$ directions. Therefore, the functions $B(k_3,\cdot,\cdot)$ for all $k_3$ and the zero mode (corresponding to $k_3=0$) can be approximated using Fourier series in which the coefficients are computed using regularization functions. These Fourier coefficients are computed on a uniform grid. Therefore, a non uniform FFT (NFFT) is required to spread arbitrarily distributed point particles onto a uniform grid and  compute Fourier coefficients. Another NFFT is also required to take the scaled Fourier transformed point particles back to the real space. To compute the force, the differentiation operator is applied to the window function in the NFFT. We remark that the window function in the NFFT algorithm and the Gaussians that we use in the gridding and gathering steps have a similar functionality. 

In section \ref{sec:results} (example \ref{ex:wall}) we provide a numerical result that compares efficiency of the SE1P and singly periodic NFFT-based methods with their triply periodic counterparts.


\subsection{The k-space formulas}
\label{subsec:kspace}
In this section we present derivations that yield the definition of the modified Green's function in \eqref{eq:G_kappa_k}. The function $\varphi^\F(\bx)$ in \eqref{eq:sum_of_phis} is the solution to the 3D Poisson equation \cite{Tornberg2014},
\begin{align}
-\Delta\varphi^\F(\bx) = f(\bx)=4\pi(f^{1\P}*\tau)(\bx), \quad f^{1\P}(\bx) = \sum_{\bp\in P_1}\sum_{\ni=1}^Nq_{\ni}\delta(\bx-\bx_{\ni}+\bp),
\label{eq:Poisson3D}
\end{align}
in which $\tau(\xi,\bx)$ is a screening function used to obtain the classical Ewald sum decomposition,
\begin{align*}
\tau(\bx) = \pi^{-3/2}\xi^3e^{-\xi^2|\bx|^2} \mbox{ with } \hat{\tau}(\kappa_1,\kappa_2,k_3) = e^{-(\kappa_1^2+\kappa_2^2)/4\xi^2}e^{-k_3^2 /4\xi^2}.
\end{align*}
Here we remind about the notation $\bx=(\br,z)=(x,y,z)$,  $\bk=(\bka,k_3)=(\kappa_1,\kappa_2,k_3)$ and  $\kappa^2=\kappa_1^2+\kappa_2^2$. 

Expanding $\varphi^\F(\bx)$ as a Fourier series in $z$ where the Fourier modes $k_3$ form the discrete set 
$\lbrace\frac{2\pi}{L_3} n:  n \in \mathbb{Z} \rbrace$ we get  
\begin{equation}
\varphi^\F(\bx)=\sum_{k_3} \phikthree(\br) e^{\ii k_3 z}.
\label{eq:phi_fourier_z}
\end{equation}
Expanding also $f(\bx)$ similarly, inserting into \eqref{eq:Poisson3D} and using orthogonality yields
\begin{align}
(-\Delta_{2D}+k_3^2)\phikthree(\br)=4\pi \fkthree(\br), \quad k_3\in\mathbb{Z},
\label{eq:poisson_zeromod0}
\end{align}
in which $\Delta_{2D}$ denotes the two dimensional Laplacian operator and 
\begin{equation}
\fkthree(\br)=\frac{1}{L_3} \sum_{\ni=1}^N q_{\ni} \left(  \pi^{-1}\xi^2e^{-\xi^2|\br-\br_{\ni}|^2} \right) e^{-k_3^2/4\xi^2}e^{-\ii k_3z_{\ni}}.
\label{eq:fkthree_of_r}
\end{equation}
The Fourier coefficients $\phikthree(\br)$ can be represented in terms of a Fourier transform in the non-periodic 
directions $x$ and $y$, 
\begin{align*}
\phikthree(\br)=\frac{1}{(2\pi)^2} \int_{\reals^2} \wh{\varphi}_{k_3}(\bka) e^{\ii \bka\cdot \br}  \dif\bka,
\end{align*}
and similarly for  $\fkthree(\br)$. For $\fkthree(\br)$ we write the inverse relation as
\begin{align*}
\hat{f}_{k_3}(\bka)=\int_{\reals^2} \fkthree(\br) e^{-\ii \bka\cdot \br}  \dif\br=
\frac{1}{L_3} \sum_{\ni=1}^N q_{\ni} e^{-\kappa^2/4\xi^2} e^{-k_3^2/4\xi^2} e^{-\ii \bka \cdot \br_{\ni}}  e^{-\ii k_3z_{\ni}}.
\end{align*}
Considering \eqref{eq:poisson_zeromod0} for $k_3 \ne 0$, the relation is
\begin{align*}
 \wh{\varphi}_{k_3}(\bka) =4\pi \frac{1}{\kappa^2+k_3^2}  \hat{f}_{k_3}(\bka), 
\end{align*}
and with this
\begin{align*}
\phikthree(\br)=\frac{1}{\pi L_3} \sum_{\ni=1}^N q_{\ni}  e^{-\ii k_3z_{\ni}} e^{-k_3^2/4\xi^2} 
\int_{\reals^2}  \frac{e^{-\kappa^2/4\xi^2}}{\kappa^2+k_3^2} e^{\ii \bka\cdot (\br-\br_{\ni})}  \dif\bka. 
\end{align*}
Inserting into \eqref{eq:phi_fourier_z} and summing up excluding $k_3=0$, we get $\varphi^{\F,k_3\ne0}$ as in \eqref{eq:ewald1p_fourier_integral}. 
The term $\varphi^{\F,k_3=0}$ is given by the remaining coefficient,
$\varphi^{\F,k_3=0}(\bx)=\phinull(\br)$, which is the solution to \eqref{eq:poisson_zeromod0} for $k_3=0$. 
We can however not proceed in the same way as for $k_3 \ne 0$ as 
it would render a singular integral. Directly solving  \eqref{eq:poisson_zeromod0} for $k_3=0$ one obtains $\varphi^{\F,k_3=0}(\bx)$ on the form 
\begin{align}
\varphi^{\F,k_3=0}(\bx) &= 
-\dfrac{1}{L_3}\sum_{\ni=1}^N q_{\ni}\left\lbrace  \log(|\br-\br_{\ni}|^2)+\E(\xi^2  |\br-\br_{\ni}|^2 )\right\rbrace,
\label{eq:k0_term}
\end{align}
which has the finite limit
\begin{align*}
\lim_{r \rightarrow 0} \left\{ \log(r^2)+\E(\xi^2 r^2) \right\} = \gamma+\log(\xi^2),
\end{align*}
and is equivalent to the zero mode term in the Ewald sum \eqref{eq:ewald1p_zero} by the charge neutrality of the system. 

We can also compute $\varphi^{\F,k_3=0}(\bx)=\phinull(\br)$ by convolving the right hand side with the Green's function or fundamental solution of the problem,
which again can be restated in the Fourier domain, 
\begin{align}
\phinull(\br)= \int_{{\mathbb{R}}^2} G(\br-\bs) \fknull(\bs)\dif\bs =
\frac{1}{(2\pi)^2} \int_{\reals^2}  \wh{G}(\bka) \hat{f}_0(\bka) e^{\ii \bka\cdot \br}  \dif\bka,
\label{eq:phinull}
\end{align}
where for the 2D Poisson equation we have
\begin{align*}
G(\br)=-\frac{1}{2\pi}\log(|\br|). 
\end{align*}
Here, again we have $ \wh{G}(\bka)=\frac{1}{\kappa^2}$, and we get exactly the same as we would get above if we proceeded for $k_3=0$ in the same manner as for $k_3 \ne 0$. 

Let us now assume that $f_0(\br)$ \eqref{eq:fkthree_of_r} is compactly supported in $\tilde{\Omega}=\{ (x,y): 0 \le x,y \le \tilde{L} \}$ and that we seek the solution $\phinull(\br)$ inside this domain. Let $R=\sqrt{2}\tilde{L}$ be the maximum distance between two points in the domain. The Gaussians $e^{-\xi^2|\br-\br_{\ni}|^2}$ in \eqref{eq:fkthree_of_r} do not formally have compact support, but can in practice be truncated once sufficiently decayed.  
Assuming that the Gaussians are truncated at a radius $w$, then $\tilde{L}=L+2w$.  

Notice that $G(\br)=G(r)=-\frac{1}{2\pi}\log(r)$, where $r=|\br|$. Following \cite{Vico2016},  we define a truncated version of $G$ as
\begin{align}
G_\subR(r) := G(r)\cdot \text{rect}\left(\dfrac{r}{2R}\right),
\label{eq:truncated_g}
\end{align}
where
\begin{align*}
\text{rect}(x) = \left\lbrace
\begin{array}{cc}
1,& |x|\le 1/2,\\
0,& |x|>1/2,\\
\end{array}
\right. 
\end{align*}
and we have
\begin{align*}
\int_{{\mathbb{R}}^2} G(\br-\bs) \fknull(\bs)\dif\bs =\int_{{\mathbb{R}}^2} G_R(\br-\bs) \fknull(\bs)\dif\bs, \quad  \br \in \tilde{\Omega}.
\end{align*}
The Fourier transform of $G_\subR$  is radially symmetric since $G_\subR$ is so, and can be computed as \cite{Vico2016},
\begin{align}
\wh{G}_\subR(\kappa) = 2\pi\int_0^{\infty} J_0(\kappa r) G_\subR(r)r\dif r &= 2\pi\int_0^{R} J_0(\kappa r) G(r)r\dif r \nonumber \\
&= \dfrac{1-J_0(R\kappa)}{\kappa^2}-\dfrac{R\log(R)J_1(R\kappa)}{\kappa},
\label{eq:g_zeromod}
\end{align}
where $J_n(\cdot)$ is the $n$th order Bessel function of the first kind. 
Moreover, $\wh{G}_\subR(\kappa)$ has a finite limit at $\kappa=0$,

\begin{align}
\lim_{\kappa \rightarrow 0} \wh{G}_\subR(\kappa) = \dfrac{R^2}{4}(1-2\log(R)).
\label{eq:g_zeromod0}
\end{align}
We can now replace $\wh{G}(\bka)$ by $\wh{G}_\subR(\kappa)$ in the definition of 
$\phinull$ in \eqref{eq:phinull}. We now have a definition of $\phikthree$ for all $k_3$. 
Inserting this into the Fourier sum in \eqref{eq:phi_fourier_z}, we finally arrive at
\begin{align}
\varphi^\F(\bx) & =\varphi^{\F,k_3\ne0}(\bx)+\varphi^{\F,k_3=0}(\bx) \notag \\
& = \frac{1}{\pi L_3} \sum_{k_3} \sum_{\ni=1}^N q_{\ni} e^{\ii k_3 (z-z_{\ni})} e^{-k_3^2/4\xi^2} 
\int_{\reals^2}  \wh{G}(\kappa,k_3) e^{-\kappa^2/4\xi^2}  e^{\ii \bka\cdot (\br-\br_{\ni})}\dif\bka,
\label{eq:ewald1p_allk}
\end{align}
for $\bx$ in the simulation box and with $\wh{G}$ as defined in \eqref{eq:G_kappa_k}.


\subsection{Spreading, scaling and gathering}
\label{subsec:foundation}
In this section, we introduce a split of the Gaussian factors in \eqref{eq:ewald1p_allk} to derive the formulas for the spreading, scaling and gathering steps in Algorithm \ref{alg:se1p}. Consider again the definition of $\varphi^\F(\bx)$ in \eqref{eq:sum_of_phis}. Introducing $\eta>0$, we split the Gaussian term in three parts, 
\begin{align}
e^{-(\kappa^2+k_3^2)/4\xi^2}=e^{-(1-\eta)(\kappa^2+k_3^2)/4\xi^2}e^{-\eta(\kappa^2+k_3^2)/8\xi^2}e^{-\eta(\kappa^2+k_3^2)/8\xi^2}.
\label{eq:splitting_1p}
\end{align}
With this we can write 
\begin{align}
\varphi^\F(\bx) &= \dfrac{1}{\pi L_3}\sum_{k_3}\int_{\rtwo}
e^{-(1-\eta)(\kappa^2+k_3^2)/4\xi^2}\wh{G}(\kappa,k_3)
e^{\ii\bka\cdot\br}e^{\ii k_3z}e^{-\eta(\kappa^2+k_3^2)/8\xi^2}\overline{\wh{H}(\bka,k_3)}\dif\bka,
\label{eq:split_ewald_1p}
\end{align}
where we have defined
\begin{align}
\wh{H}(\bka,k_3):=\sum_{\ni=1}^N q_{\ni}e^{-\eta(\kappa^2+k_3^2)/8\xi^2}e^{\ii \bka\cdot\br_\ni}e^{\ii k_3z_{\ni}}.
\label{eq:Hhat_ewald_1p}
\end{align}
Continuing from \eqref{eq:Hhat_ewald_1p}, applying the convolution and Parseval theorems, the inverse Fourier transform of \eqref{eq:Hhat_ewald_1p} can be written as in \eqref{eq:spread_1p}, step 2 in Algorithm \ref{alg:se1p} (although without truncation of Gaussians).
For this derivation, we have used the fact that $e^{-\eta k^2/8\xi^2} $ is the Fourier transform of $\left(\frac{2\xi^2}{\pi\eta}\right)^{3/2}e^{-2\xi^2|\bx|^2/\eta}$ and $e^{\ii \bk\cdot \bx_{\ni}}$ is the Fourier transform of $\delta(\bx-\bx_{\ni})$ where $\delta(\cdot)$ is the Dirac delta function. 
The function $e^{-2\xi^2|\bx|^2/\eta}$ in \eqref{eq:spread_1p} serves as our window function. The Gaussians  are smooth and minimize aliasing errors, however, they do not have compact support and therefore, have to be truncated in practice. Utilizing a Gaussian as the window function is a key point in our algorithm. In fact, the accuracy of the FFT-based quadrature method, which we present here, relies heavily on the regularity of the window function and the accuracy drops considerably whenever a less regular window function, e.g., cardinal B-splines in the SPME method, is used, {\cite{Lindbo2012}}. We shall return to this discussion in section \ref{subsec:approximation_error}.

Considering equation \eqref{eq:split_ewald_1p} the Fourier coefficients $\wh{H}$ are scaled as in \eqref{eq:scale_1p} and therefore \eqref{eq:split_ewald_1p} is rewritten as
\begin{align}
\varphi^\F(\bx,\xi) = \varphi^\F(\br,z,\xi) &= \dfrac{1}{\pi L_3}\sum_{k_3}\int_{\rtwo}e^{\ii\bka\cdot\br}e^{\ii k_3z}e^{-\eta(\kappa^2+k_3^2)/8\xi^2}\overline{\wh{\wt{H}}(\bka,k_3)}\dif\kappa_1\dif\kappa_2.
\label{eq:int_1p}
\end{align}
Given $\wh{\wt{H}}$, $\wt{H}$  is defined through an inverse mixed Fourier transform. Applying the Plancherel and convolution theorems, equation \eqref{eq:int_1p} evaluated at a target point $\bx_{\mi}=(\br_{\mi},z_{\mi})$ can be written as
\begin{align}
\varphi^\F(\bx_{\mi}) &= \dfrac{2}{L_3}\left(\dfrac{2\xi^2}{\pi\eta}\right)^{3/2}\int_{\rtwo}\int_{\Omega}\wt{H}(\br,z)e^{-2\xi^2|\br_{\mi}-\br|^2/\eta}e^{-2\xi^2(z_{\mi}-z)_{\ast}^2/\eta}\dif z\dif \br.
\label{eq:se1p_complete}
\end{align}
Equation \eqref{eq:se1p_complete} can then be discretized using the trapezoidal rule to obtain \eqref{eq:se1p_complete_discretized} in step 6 of Algorithm \ref{alg:se1p}.


\subsection{Discretizations}
\label{sec:discretization}
Given the input parameters for Algorithm \ref{alg:se1p}, $\eta$ is set according to \eqref{eq:comp_eta} in step 1 of the algorithm. In step 2, we evaluate $H$ as defined in \eqref{eq:spread_1p} on the uniform grid using truncated Gaussians. In step 3, we then need to compute the mixed Fourier transform
\begin{align}
\wh{H}(\kappa_1,\kappa_2,k_3)=\sum_{l=0}^{M-1}e^{-\ii k_3 l h}\int_{\mathbb{R}^2}H(x,y,lh)e^{-\ii \bka\cdot\br} \dif x\dif y. 
\label{eq:mft}
\end{align} 
This integral can be approximated by 
\begin{align}
\wh{H}(\kappa_1,\kappa_2,k_3) \approx h^2\sum_{n,m,l}H(nh,mh,lh)e^{-\ii (\kappa_1 nh+\kappa_2 mh+k_3lh)},
\label{eq:mft_trap} 
\end{align}
in which $l\in\lbrace0,1,\ldots,M-1\rbrace$ and $n,m\in\lbrace0,1,\ldots,\tilde{M}-1\rbrace$.
Approximation errors are introduced both due to the truncation of the Gaussian and the integration by the trapezoidal rule. In section \ref{subsec:approximation_error} we show that the errors decay spectrally in the number of points in the support of each Gaussian as $\eta$ is chosen to balance the two errors.

The scaling in \eqref{eq:scale_1p}, step 4 of Algorithm \ref{alg:se1p}, is straight forward. Then, in step 5, we need to compute the inverse mixed Fourier transform 
\begin{align}
\wt{H}(x,y,z)=\dfrac{1}{(2\pi)^2}\sum_{k_3}e^{\ii k_3z}\int_{\mathbb{R}^2}\wh{\wt{H}}(\kappa_1,\kappa_2,k_3)e^{\ii \bka\cdot\br} \dif\kappa_1\dif\kappa_2, 
\label{eq:imft}
\end{align} 
for $x,y,z$ values on the uniform grid, i.e., $(x,y,z)=(nh,mh,lh)$. We again approximate this integral with the trapezoidal rule
\begin{align}
\wt{H}(x,y,z) \approx 
\dfrac{1}{(2\pi)^2}
\sum_{k_3}(\Delta\kappa)^2e^{\ii k_3z}\sum_{\kappa_1,\kappa_2}\wh{\wt{H}}(\kappa_1,\kappa_2,k_3)e^{\ii (\kappa_1 x+\kappa_2 y)},
\label{eq:imft_trap} 
\end{align}
where $\Delta\kappa=\frac{2\pi}{s\tilde{L}}$ with $s\geq 1$ and $\kappa_{1,2}\in\Delta \kappa\lbrace-\frac{s\tilde{M}}{2},\ldots,\frac{s\tilde{M}}{2}-1\rbrace$ and $k_3\in\frac{2\pi}{L_3}\lbrace-\frac{M}{2},\ldots,\frac{M}{2}-1\rbrace$. We remind that the oversampling factor $s$ depends on $k_3$, cf.\ \eqref{eq:upsamplings}.

The maximum value of $\kappa_{1,2}$ in \eqref{eq:imft_trap} is $\frac{2\pi}{\tilde{L}} \frac{\tilde{M}}{2}$. 
The choice of $\tilde{M}$, or rather $M$, is hence related to the truncation error estimate for the Ewald $k$-space sum, with $\frac{M}{2}=k_{\infty}$. 
Approximating the integral in \eqref{eq:mft} by the trapezoidal rule to get \eqref{eq:mft_trap} yields similar requirements on the 
resolution as the approximation introduced in  \eqref{eq:se1p_complete_discretized}, and the error will depend on the truncation level of the Gaussians and how well they are resolved on the grid. 

Consider now approximating the integral in \eqref{eq:imft} by the trapezoidal rule to obtain \eqref{eq:imft_trap}. 
For the case where $k_3\ne0$, the scaling factor $\wh{G}$ in $\wh{\wt{H}}$ contains the factor $(\kappa^2+k_3^2)^{-1}$, cf.\ \eqref{eq:G_kappa_k}. The $k_3$ values form a discrete set, and for smaller values of $|k_3|$, this factor will introduce a rapid variation around $\kappa=0$, making the integral more difficult to resolve. 
It is for this reason that we will need to increase the resolution of the discretization by a factor of $s$, i.e., oversample by a factor of $s$ in $k$-space. The integral is moderately oscillatory and is damped by the rapid decay of $\wh{\wt{H}}(\kappa_1,\kappa_2,k_3)$, so for larger values of $|k_3|$ there is really no need for oversampling. In section \ref{sec:spectral_accuracy} (Theorem \ref{theo:spectral_accuracy}) we will address the necessity of oversampling for small values of $|k_3|$ and discuss the relation of oversampling factor and approximation error. 

For $k_3=0$ there is again a need of oversampling to resolve $\wh{G}(\kappa,0)$. 
It is hence unnecessary to oversample for all $k_3$ modes, as it would only introduce an extra computational cost. We have therefore introduced different oversampling rates for different values of $k_3$, cf.\ \eqref{eq:upsamplings}. The choices of which modes to oversample and by which factor depends on the accuracy requirement and will be further discussed in section \ref{subsec:parameter_selection}.

The oversampling rate that is required for a certain $k_3$ mode dictates the amount of zero padding used in the  2D FFT for that particular mode when evaluating \eqref{eq:mft_trap}. Note also that the multiplication of the factor $h^2$ in \eqref{eq:mft_trap} and $\frac{(\Delta \kappa)^2}{(2\pi)^2}$ in \eqref{eq:imft_trap} cancels, considering a built-in factor of $\frac{1}{(s\tilde{M})^2}$ for the 2D IFFT. Therefore, no scaling is made to compute the $k_3=0$ mode. A similar argument is valid for $k_3\neq0$.

In addition to the parameters defined in the 3d-periodic spectral Ewald \cite{Lindbo2011}, we have introduced extra parameters here ($s_0$, $\sl $ and $\nl $) that have to be chosen carefully to attain a given error tolerance and a reasonable speedup. In \cite{Vico2016} it is suggested that $s_0=4$ is required to resolve the Fourier transform of the truncated Green's function \eqref{eq:truncated_g} and to be able to compute aperiodic convolutions by FFT. In \cite{Klinteberg2016} we show that the minimal oversampling factor is smaller than this, $s_0=1+\sqrt{2}$. We shall also see that since $\nl $ is small compared to the grid size, the cost of applying an oversampling factor $\sl >1$ is relatively cheap, see figure \ref{fig:compare_1p3p_nestler} (left). In section \ref{subsec:parameter_selection} we will discuss how to select these parameters.


\subsection{Evaluation of the energy and force}
Besides the potential, calculation of other relevant quantities such as energy and force is of great interest in MD simulations. Since
\begin{align*}
E = \sum_{\mi=1}^Nq_{\mi}\varphi(\bx_{\mi}),
\end{align*}
the corresponding Ewald1P formula to compute energy can be obtained by multiplication of the electrostatic potential $\varphi(\bx_{\mi})$ with $q_{\mi}$ and a summation over $\mi$.

The electrostatic force exerted on each particle by other particles is given by
\begin{align}
{\bf F}(\bx_{\mi})=-\dfrac{\dif E}{\dif\bx_{\mi}}=-\dfrac{1}{2}q_{\mi}\dfrac{\dif\varphi(\bx_{\mi})}{\dif\bx_{\mi}}.
\label{eq:force_formula}
\end{align}
Applying this on \eqref{eq:ewald1p_real}-\eqref{eq:ewald1p_self} we obtain
\begin{align}
{\bf F}^{1\P}(\bx_{\mi})= &{\bf F}^\R(\bx_{\mi})+{\bf F}^\F(\bx_{\mi})+{\bf F}^{k_3=0}_{\mi} \nonumber \\
=& q_{\mi}\sum_{\bp\in P_1}^{'}\sum_{\ni=1}^Nq_\ni\left(\frac{2\xi}{\sqrt{\pi}}e^{-\xi^2|\bx_{\mi\ni,\bp}|^2}+\frac{\erfc(\xi|\bx_{\mi\ni,\bp}|)}{|\bx_{\mi\ni,\bp}|}\right)\frac{\bx_{\mi\ni,\bp}}{|\bx_{\mi\ni,\bp}|^2} \label{eq:force_real} \\
 & -\dfrac{\ii}{2\pi L_3}q_{\mi}\sum_{k_3\neq0}\sum_{\ni=1}^N q_\ni\int_{\mathbb{R}^2} \frac{\bk e^{-(\kappa^2+k_3^2)/4\xi^2}}{\kappa^2+k_3^2}e^{\ii\bk\cdot(\bx_{\mi}-\bx_{\ni})}\dif\bka \label{eq:force_fourier} \\
&+\dfrac{1}{L_3}q_{\mi}\sum_\subindex{\ni=1}{\ni\neq \mi}^Nq_{\ni}\dfrac{\br_{\mi\ni,0}}{\rho_{\mi\ni}^2}\left(1-e^{-\xi^2\rho_{\mi\ni}^2} \right), \label{eq:force_zero}
\end{align}
where 
\begin{align*}
\bx_{\mi\ni,\bp} &= \bx_{\mi}-\bx_{\ni}+\bp, \\ 
\br_{\mi\ni,0}&=(\br_{\mi}-\br_{\ni},0).
\end{align*}
The real space sum \eqref{eq:force_real} is then evaluated as in the 3d-periodic case. The $k_3=0$ term \eqref{eq:force_zero} can be again embedded into the nonzero Fourier sum with almost no cost. The integral in \eqref{eq:force_fourier} can be evaluated using the fact that it is a differentiation of the incomplete modified Bessel function in \eqref{eq:ewald1p_fourier_complete}. This approach is used to compute the force reference solution. 

Among different fast methods to evaluate the force, analytic differentiation of the potential has been shown to be the most efficient method which also preserves spectral accuracy, see \cite{Deserno2010}. Therefore, to evaluate the $k$-space sum, we do not proceed as in the formula in \eqref{eq:force_fourier} since it needs 2 extra 3D FFTs. Instead we differentiate \eqref{eq:se1p_complete_discretized} with respect to $\bx_{\mi}$. We have
\begin{align}
{\bf F}^\F(\bx_{\mi}) &= Ch^3\sum_{\ni}(\bx_{\mi}-\bx_{\ni})_{\ast,3}\wt{H}(\bx_{\ni})e^{-2\xi^2|\br_{\mi}-\br_{\ni}|^2/\eta}e^{-2\xi^2(z_{\mi}-z_{\ni})_{\ast}^2/\eta},
\label{eq:force_se1p}
\end{align}
where $C=\frac{2}{L_3}\frac{\xi^2}{\eta}\left(\frac{2\xi^2}{\pi\eta}\right)^{3/2}$ and $(\cdot)_{\ast,3}$ denotes that the periodicity is applied on the third dimension only. We remark that this sum and \eqref{eq:se1p_complete_discretized} can be computed concurrently.


\section{Approximation errors and parameter selection}
\label{sect:approxerr_parsel}
In addition to the truncation error due to the finite representation of the $k$-space sum (section \ref{sec:truncation_error}), \textit{approximation errors} are also involved in our fast method. These errors are committed due to (a) the truncation of Gaussians, (b) applying the quadrature rule to evaluate \eqref{eq:se1p_complete} and (c) approximating Fourier integrals. The approximation errors due to (a) and (b) are considered in section  \ref{subsec:approximation_error} and (c) is discussed in section \ref{sec:spectral_accuracy}. 
\subsection{Truncating and resolving Gaussians}
\label{subsec:approximation_error}

To construct our fast method in section \ref{subsec:foundation} we introduced a free parameter $\eta$ which can control the width of Gaussians. As pointed out in \cite{Lindbo2011} a suitable choice is to set 
\begin{align}
\eta=\left( \dfrac{2w\xi}{m} \right)^2,
\label{eq:eta}
\end{align}
where $w$ denotes the half width and $m$ the shape parameter of a Gaussian, see figure \ref{fig:support}.
\begin{figure}[htbp]
\centering 
\includegraphics[width=.4\textwidth,height=.3\textwidth]{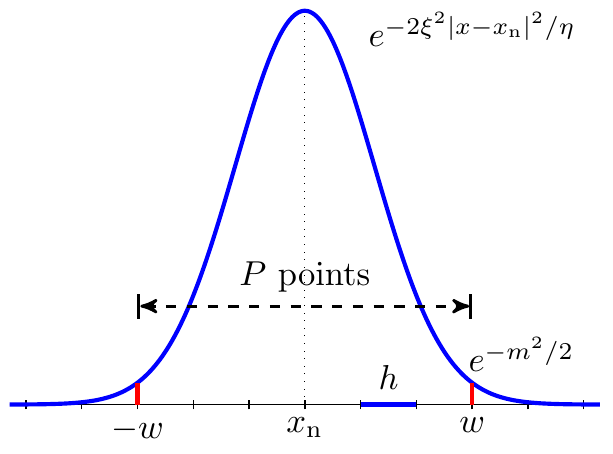}
\caption{Gaussians are truncated with $P$ points in the support on $[x_\ni-w,x_\ni+w]$.}
\label{fig:support}
\end{figure}
Let $P\leq M$ be the number of points in the support of each Gaussian in each direction and $h=L/M$ be the grid spacing. Therefore, $w=Ph/2$.

Using the parameters defined above, the following theorem provides an estimate for the approximation error due to (a) and (b).
\begin{theorem}
Given the input parameters $\xi>0$, $h>0$ and a positive odd integer $P\leq M$, let $w=Ph/2$ and define $\eta$ as in \eqref{eq:eta}. The error committed in evaluating equation \eqref{eq:ewald1p_fourier_integral} by truncating the Gaussians at $|\textsc{\bx}-\textsc{\bx}_{\textsc{\bn}}|=w$ and applying the trapezoidal rule $\varphi^\F$ \eqref{eq:se1p_complete_discretized} can be estimated by
\begin{align}
|\varphi^\F-\varphi_{_\mathrm{SE1P}}^\F|_{\infty}\leq C(e^{-\pi^2P^2/2m^2}+{\mathrm{erfc}}(m/\sqrt{2})).
\label{eq:approximation_error}
\end{align}
\end{theorem}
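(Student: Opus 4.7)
The plan is to split the total error into two independent contributions via the triangle inequality: one coming from truncating the Gaussian window at radius $w=Ph/2$ in both the spreading step \eqref{eq:spread_1p} and the gathering step \eqref{eq:se1p_complete_discretized}, and a second one coming from replacing the continuous (mixed) Fourier integrals in \eqref{eq:mft} and \eqref{eq:imft} by the trapezoidal sums \eqref{eq:mft_trap} and \eqref{eq:imft_trap}. That is, I would introduce an intermediate quantity $\tilde{\varphi}^\F$ that still uses truncated Gaussians but evaluates the convolution integrals exactly, and then write
\begin{align*}
|\varphi^\F-\varphi^\F_{_\mathrm{SE1P}}|_\infty\;\le\;\underbrace{|\varphi^\F-\tilde{\varphi}^\F|_\infty}_{E_{\mathrm{trunc}}}\;+\;\underbrace{|\tilde{\varphi}^\F-\varphi^\F_{_\mathrm{SE1P}}|_\infty}_{E_{\mathrm{quad}}} .
\end{align*}

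For $E_{\mathrm{trunc}}$, I would use the key algebraic identity resulting from \eqref{eq:eta}: with $\eta=(2w\xi/m)^2$ the spreading kernel becomes $e^{-2\xi^2|\bx|^2/\eta}=e^{-(m|\bx|/w)^2/2}$, which at $|\bx|=w$ equals $e^{-m^2/2}$. Bounding the contribution of the removed tail by integrating this Gaussian on $|\bx|>w$ produces an error proportional to $\erfc(m/\sqrt{2})$. The same bound applies to the gathering step, since it uses the same truncated Gaussian against the bounded grid function $\wt{H}$, and the resulting constant absorbs the factor $Q=\sum q_\ni^2$, the domain volume and powers of $\xi$.

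For $E_{\mathrm{quad}}$, I would apply the Poisson summation formula to the trapezoidal rule with step $h$: the aliasing error is controlled by $\sum_{\bn\ne 0}|\hat{g}(2\pi\bn/h)|$, where $g$ is the effective Gaussian window with $\hat{g}(\bk)\propto e^{-\eta|\bk|^2/(8\xi^2)}$. The dominant alias sits at $|\bk|=2\pi/h$, giving a contribution of the order
\begin{align*}
\exp\!\left(-\frac{\eta\pi^2}{2h^2\xi^2}\right)=\exp\!\left(-\frac{\pi^2 P^2}{2m^2}\right),
\end{align*}
after substituting $\eta=(Ph\xi/m)^2$. Summing a geometric tail over the remaining aliases yields the same exponential up to a constant, and the combined estimate \eqref{eq:approximation_error} follows.

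The main obstacle I anticipate is making the split clean in the mixed discrete/continuous setting used here: unlike the fully triply periodic SE3P analysis, the $z$-direction is discretely transformed while $(x,y)$ carry a continuous (and then discretized) Fourier transform, and the scaling by $\wh G(\kappa,k_3)$ includes the Vico-style truncated Green's function on the $k_3=0$ mode. One therefore has to verify that neither the $(\kappa^2+k_3^2)^{-1}$ factor for $k_3\ne 0$ nor the bounded (and smooth) $\wh G_\subR$ for $k_3=0$ amplifies the aliased tail beyond the Gaussian decay estimate above. This is where the smoothness established in \eqref{eq:g_zeromod}--\eqref{eq:g_zeromod0} is essential and where the oversampling rates $s_0,\sl$ used by the AFT must be taken large enough that the aliasing estimate above, rather than the scaling factor, is the dominant contribution to $E_{\mathrm{quad}}$.
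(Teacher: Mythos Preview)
The paper does not actually prove this theorem in the text; it states the estimate and defers to \cite{Lindbo2011} for the underlying analysis. Your plan---splitting into a Gaussian-truncation error (yielding the $\erfc(m/\sqrt{2})$ term via the tail integral of $e^{-2\xi^2|\bx|^2/\eta}=e^{-(m|\bx|/w)^2/2}$ beyond $|\bx|=w$) and a trapezoidal/aliasing error (yielding $e^{-\pi^2P^2/2m^2}$ via Poisson summation applied to the dominant alias at $|\bk|=2\pi/h$)---is precisely the argument carried out in that reference, so your approach is correct and matches the intended one.

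One point of scope, however: your last paragraph is not part of this theorem and should be dropped. The paper explicitly separates the approximation errors into (a) truncation of Gaussians, (b) the trapezoidal rule in the gathering step \eqref{eq:se1p_complete_discretized}, and (c) the approximation of the Fourier integrals \eqref{eq:mft}--\eqref{eq:imft}. The present statement covers only (a) and (b); source (c)---where the factor $\wh{G}(\kappa,k_3)$, the $(\kappa^2+k_3^2)^{-1}$ behavior near $\kappa=0$, and the upsampling rates $s_0,\sl$ enter---is handled separately in Theorem~\ref{theo:spectral_accuracy}. In other words, the estimate \eqref{eq:approximation_error} isolates the window-related errors under the tacit assumption that the inverse Fourier integral in \eqref{eq:imft} is resolved to sufficient accuracy, so you need not (and should not) drag $\wh{G}_\subR$ or the AFT parameters into this proof.
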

Balancing both terms, \eqref{eq:approximation_error} thereupon simplifies to read 
\begin{align}
|\varphi^\F-\varphi_{_\mathrm{SE1P}}^\F|_{\infty}\lesssim Ae^{-c^2\pi P/2},
\label{eq:approximation_error_oneterm}
\end{align}
in which we have used $m= c\sqrt{\pi P}$, $c<1$, see \cite{Lindbo2011}. This leaves us with a single parameter $P$ to control these types of errors. 

In figure \ref{fig:se1p_approx_error} we plot the scaled rms error in evaluating the $k$-space part of the potential as a function of $P$ together with the error estimate given in \eqref{eq:approximation_error_oneterm}. In this figure, the error is scaled with $A=\sqrt{Q\xi L}/L$. We run the simulation for 100 different systems with $N=100,200,300,400$, $L=1,5,10,20,40$ and $\xi=(5,15,25,30,35)/L$. This confirms the validity and sharpness of the approximation error estimate. In addition, in the absence of the other errors, $P\approx12$ and $\approx24$, are sufficient to achieve single and double precision accuracies respectively.
We numerically demonstrate that the approximation error in computing the force takes the form of $e^{-c^2\pi P/2}$ but with a different constant, $A=Q\sqrt{\xi L}/L$.
\begin{figure}[htbp]
  \centering \includegraphics[width=0.4\textwidth,height=.4\textwidth]{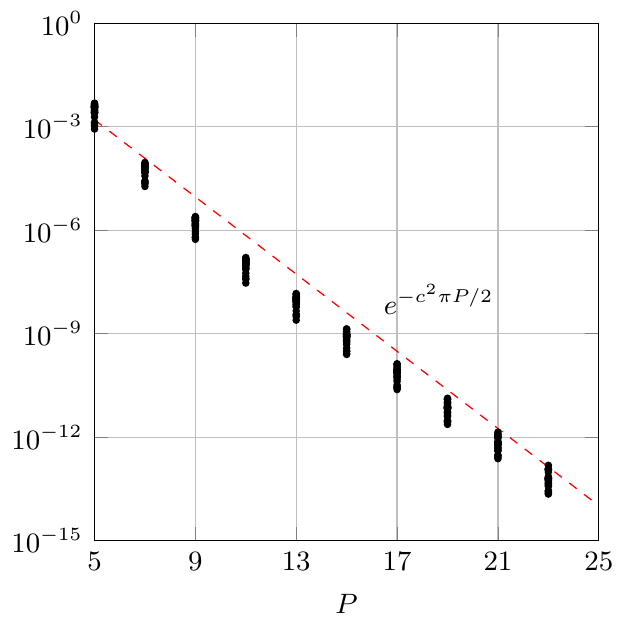}
  \caption{The estimate (red dashed line) and measured absolute rms error in evaluating the potential scaled with $A=\sqrt{Q\xi L}/L$ as a function of the number of points in the support of Gaussians $P$ for 100 different systems. We choose $N=100,200,300,400$, $L=1,5,10,20,40$ and $\xi=(5,15,25,30,35)/L$ and $c=0.95$. Other parameters such as grid size and oversampling factor are chosen appropriately that the other errors are negligible.}
  \label{fig:se1p_approx_error}
\end{figure}


\subsection{Upsampling for computing inverse Fourier transforms}
\label{sec:spectral_accuracy}
Approximation errors are introduced also due to approximating the Fourier integrals \eqref{eq:mft} and \eqref{eq:imft}.
In this section we wish to address the need for upsampling and relate the choice of upsampling factor $\sl$ and local pad size $\nl$ to approximation errors. We also show that the approximation of the Fourier integrals is spectrally accurate.

The Fourier integral in \eqref{eq:mft} has a fast decaying integrand. In \cite{Lindbo2012} the authors show that the integral can be computed without any upsampling up to the desired error tolerance. However, as we explained in section \ref{sec:discretization}, this is not the case for the inverse Fourier integral in \eqref{eq:imft} since the integrand varies quickly near $\kappa=0$ for small $|k_3|$. Therefore in the next theorem we consider an infinite Fourier integral similar to the one given in \eqref{eq:imft} in one dimension.
\begin{theorem}
\label{theo:spectral_accuracy}
Let $k_3,\alpha>0$ and define the integral 
\begin{align}
F=\int_{\mathbb{R}}\hat{f}(k)\dif k, \quad \hat{f}=\dfrac{e^{-\alpha(k^2+k_3^2)}}{k^2+k_3^2}.
\label{eq:fourier_integral_theorem}
\end{align}
For any $h>0$, define the trapezoidal rule approximation
\begin{align}
T_h = h\sum_{j=-\infty}^{\infty} \hat{f}(jh),\quad h>0,
\label{eq:theorem_trapezoidal}
\end{align}
then
\begin{align}
|T_h-F| = \dfrac{2\pi}{k_3}\dfrac{1}{e^{2\pi k_3/h}-1}.
\label{eq:proof_I}
\end{align}
\end{theorem}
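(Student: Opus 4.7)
The natural tool here is the Poisson summation formula. Applied to the sum $T_h = h\sum_{j\in\mathbb{Z}}\hat{f}(jh)$ in the form $h\sum_j g(jh) = \sum_n \tilde{g}(2\pi n/h)$, where $\tilde{g}(\omega)=\int_{\mathbb{R}} g(k)e^{-i\omega k}\,dk$, it gives
\begin{equation*}
T_h = \sum_{n\in\mathbb{Z}} \tilde{\hat{f}}(2\pi n/h).
\end{equation*}
Since $\tilde{\hat{f}}(0) = \int_{\mathbb{R}}\hat{f}(k)\,dk = F$, the $n=0$ term cancels $F$ and I am left with
\begin{equation*}
T_h - F = \sum_{n\neq 0} \tilde{\hat{f}}(2\pi n/h).
\end{equation*}
The problem thus reduces to computing (or bounding) $\tilde{\hat{f}}$.

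For the Fourier transform I would use the Schwinger/heat-kernel representation $1/(k^2+k_3^2)=\int_0^\infty e^{-t(k^2+k_3^2)}\,dt$, which combines with the Gaussian prefactor to give $\hat{f}(k) = \int_\alpha^\infty e^{-s(k^2+k_3^2)}\,ds$. Fubini and the standard Gaussian Fourier transform then produce
\begin{equation*}
\tilde{\hat{f}}(\omega) = \int_\alpha^\infty \sqrt{\pi/s}\,e^{-sk_3^2 - \omega^2/(4s)}\,ds,
\end{equation*}
which is dominated by its $\alpha=0$ counterpart. Using the classical identity $\int_0^\infty s^{-1/2}e^{-as - b/s}\,ds = \sqrt{\pi/a}\,e^{-2\sqrt{ab}}$ with $a=k_3^2,\ b=\omega^2/4$ gives the clean estimate
\begin{equation*}
\tilde{\hat{f}}(\omega) \leq \frac{\pi}{k_3}\,e^{-|\omega|k_3}.
\end{equation*}

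Inserting this into the error sum and recognising a geometric series produces the target formula,
\begin{equation*}
|T_h - F| \leq \frac{\pi}{k_3}\sum_{n\neq 0}e^{-2\pi|n|k_3/h} = \frac{2\pi}{k_3}\cdot\frac{e^{-2\pi k_3/h}}{1-e^{-2\pi k_3/h}} = \frac{2\pi}{k_3(e^{2\pi k_3/h}-1)}.
\end{equation*}
The bound becomes an equality in the limit $\alpha\to 0$, where $\hat{f}(k)=1/(k^2+k_3^2)$ and the Fourier transform identity is exact; for $\alpha>0$ the Gaussian factor strictly shrinks the error, so the stated formula is really an upper bound.

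The main obstacle is the Fourier-transform step. A tempting alternative would be contour deformation, pushing the integration line toward the poles at $k=\pm i k_3$, but the Gaussian factor $e^{-\alpha k^2}$ grows exponentially off the real axis and spoils that approach. The heat-kernel representation above sidesteps this by trading the Lorentzian denominator for an extra integral over the diffusion time $s$, after which only the well-behaved real-line Gaussian Fourier transform is needed, and the $\alpha$ dependence drops out cleanly when one passes to an upper bound.
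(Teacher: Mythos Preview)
Your argument via Poisson summation is correct and complete, and you have in fact been more careful than the paper on one point. The paper takes the classical Donaldson--Elliott contour-integral route (as surveyed by Trefethen and Weideman): one writes the trapezoidal error as $\oint_{\mathsf C} f(z)\,m(z)\,\dif z$ minus the residues of $f\,m$ at the poles $z=\pm\ii k_3$ of $f$, with $m(z)$ built from $\cot(\pi z/h)$; the residue computation gives exactly $\frac{2\pi}{k_3}(e^{2\pi k_3/h}-1)^{-1}$ because the Gaussian factor $e^{-\alpha(z^2+k_3^2)}$ equals $1$ at those poles. The paper then asserts that the contour integral tends to zero ``since $f(z)m(z)$ decays at infinity,'' yielding the stated equality.

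Your route sidesteps the contour entirely: Poisson summation plus the heat-kernel representation reduces the error to a sum of explicitly positive integrals, each of which you bound above by its $\alpha=0$ value using the closed-form Laplace identity, and the geometric series delivers the same expression. Your remark that contour deformation is obstructed by the growth of $e^{-\alpha z^2}$ along the imaginary direction is exactly the point: the horizontal pieces of the rectangular contour do \emph{not} vanish as the contour is pushed to infinity, because $|e^{-\alpha z^2}|=e^{\alpha a^2}$ on the line $\im z=a$ overwhelms the $e^{-2\pi a/h}$ decay of $m$. So the paper's claimed equality is not fully justified for $\alpha>0$; what one actually gets is the upper bound you prove, with equality only in the limit $\alpha\to 0$. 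For the application this distinction is immaterial---the formula is used precisely as an upper bound to pick oversampling parameters---but your derivation is the one that stands on its own.
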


\begin{proof} See \ref{appendix:proof}.
\end{proof}

The corresponding two-variable form of the integral in \eqref{eq:fourier_integral_theorem} can also be approximated using the trapezoidal rule. To obtain an error estimate for this approximation, first note that the integral in \eqref{eq:fourier_integral_theorem} evaluates as
\begin{align*}
F_1=\int_{\mathbb{R}}\dfrac{e^{-\alpha(k^2+k_3^2)}}{k^2+k_3^2}\dif k=\dfrac{\pi}{|k_3|}\erfc(\sqrt{\alpha}|k_3|),
\end{align*}
and the related two dimensional integral as
\begin{align*}
F_2=\int_{\mathbb{R}^2}\dfrac{e^{-\alpha(k_1^2+k_2^2+k_3^2)}}{k_1^2+k_2^2+k_3^2}\dif k_1\dif k_2=\pi \E(\alpha k_3^2),
\end{align*}
Observing that $\erfc(x)$ and $\E(x^2)$ behave similarly for large $x$, we obtain our heuristic error estimate in approximating $F_2$ using the trapezoidal rule,
\begin{align}
2\pi C e^{-2\pi |k_3|/h}.
\label{eq:proof_I2d}
\end{align}
The error estimate \eqref{eq:proof_I2d} supports our claim that for large enough $\nl $, there is no need for upsampling. In the following examples we assess the accuracy of the equality \eqref{eq:proof_I} and show the reliability of \eqref{eq:proof_I2d}. We let $\alpha=0.1$ and choose $C=1$. Figure \ref{fig:error_theorem} shows the measured error and the approximation error estimates \eqref{eq:proof_I} and \eqref{eq:proof_I2d} as a function of grid spacing $h$ for $k_3\in\lbrace1,2,3\rbrace$. As the error bound indicates, the approximation error decays as a function of $h$ and faster than any power of $h$. Moreover, we observe that for sufficiently large $|k_3|$, any level of accuracy can be achieved with $h=1$.

\begin{figure}[htbp]
  \centering
  \begin{subfigure}[b]{0.49\textwidth}
    \centering
    \includegraphics[width=\textwidth,height=.8\textwidth]{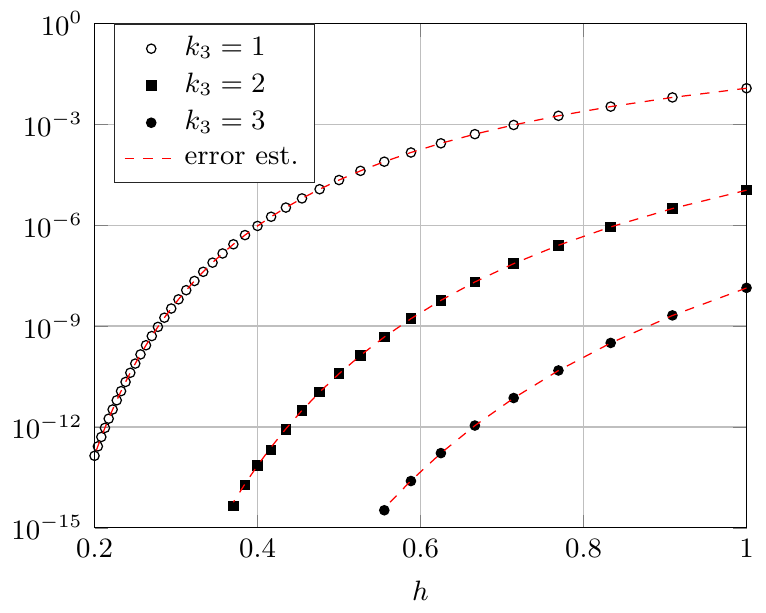}
  \end{subfigure}
  \begin{subfigure}[b]{0.49\textwidth}
    \centering 
    \includegraphics[width=\textwidth,height=.8\textwidth]{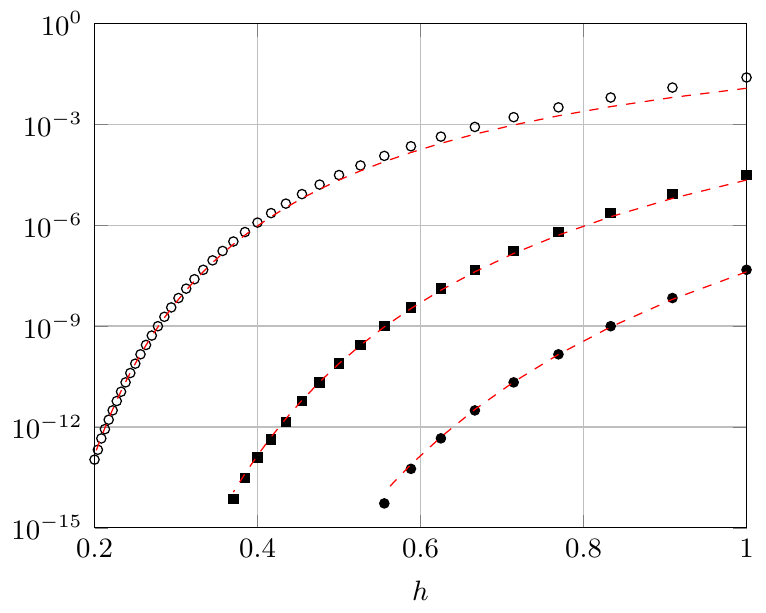}
  \end{subfigure}
  \caption{Measured error and approximation error estimate (dashed red) as a function of $h$ and for $k_3\in\lbrace1,2,3\rbrace$ with $\alpha=0.1$. (Left) For the one variable integral $F_1$ and using the error identity \eqref{eq:proof_I}. (Right) For the two-variable integral $F_2$ and using the error estimate \eqref{eq:proof_I2d} with $C=1$.}
  \label{fig:error_theorem}
\end{figure}

As has been stated before, the integrals examined here and the Fourier integral in \eqref{eq:imft} are similar but not the same. Therefore, the error estimate \eqref{eq:proof_I2d} does not give the actual error in approximating the Fourier integral \eqref{eq:imft} using the trapezoidal rule.  It however, gives an understanding of the exponential decay with $|k_3|$ and it can be used to compute the upsampling factor. To relate the actual error and the error estimate in \eqref{eq:proof_I2d}, we define $h=\frac{2\pi}{L}\frac{1}{\sl }$ and $k_3=\frac{2\pi \nl}{L}$. In section \ref{subsec:parameter_selection} we discuss how to choose $\sl $ and $\nl$ in practice. In figure \ref{fig:sgEqual1} we numerically demonstrate that if $\sl $ and $\nl$ are chosen properly, no oversampling is needed to resolve the quadrature of the Fourier integrals on $\mathbb{J}$ \eqref{eq:J}. Note that we always require that the simulation box is extended by $2w=Ph$ in each non periodic dimension to include the support of truncated Gaussians. 

\begin{figure}[htbp]
  \centering 
  \includegraphics[width=.6\textwidth]{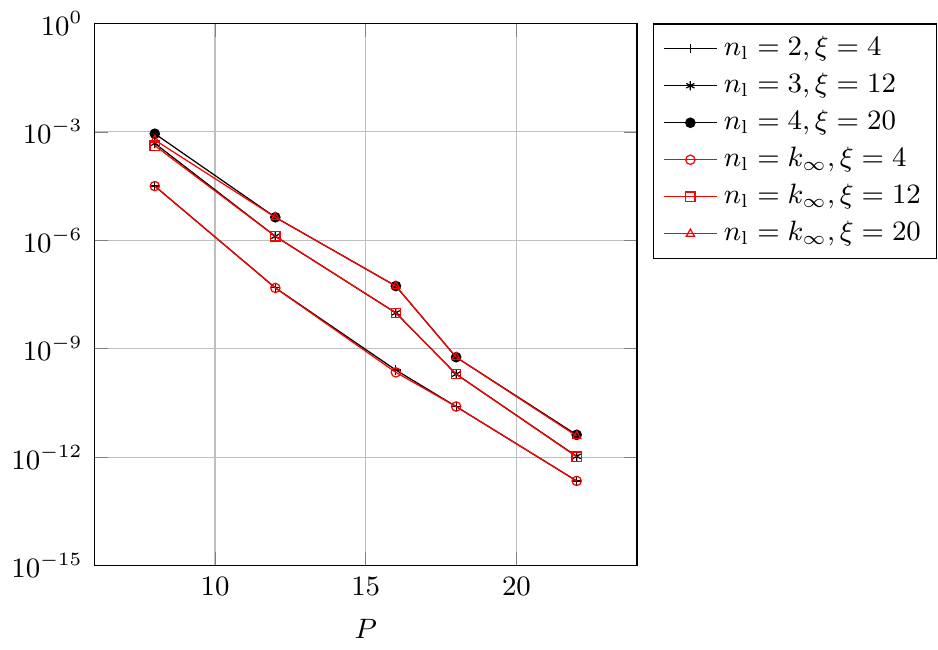}
  \caption{Error in the $k$-space sum vs. $P$ for a system with $L=1$, $N=100$ and $\xi\in\lbrace4,12,20\rbrace$. An upsampling is used to either upsample all modes ($\nl=k_\infty$) or $\nl$ modes with $\nl\in\lbrace2,3,4\rbrace$. $M$ is large enough such that the truncation error is negligible.}
  \label{fig:sgEqual1}
\end{figure}


\subsection{Parameter selection}
\label{subsec:parameter_selection}
There are several parameters involved in the computation of the Ewald sum and more parameters are included with the introduction of a fast method. The decomposition parameter $\xi$ is chosen such that the runtime of evaluating the real space and $k$-space sums are similar. This selection however, depends on the algorithm and implementation. With a given error tolerance and $\xi$ and by using \eqref{eq:rc} and \eqref{eq:kinf}, the cut-off radius $r_\c$ and the bound for the Fourier modes $k_{\infty}$ are computed. The number of grid points in the periodic direction can then be computed by $M=2k_{\infty}$.

In the SE1P method, we have introduced another parameter $P\leq M$ which controls the width of Gaussians. This parameter is computed by inverting \eqref{eq:approximation_error_oneterm} and is set to even integers. We also set the Gaussian shape parameter as $m=0.95\sqrt{\pi P}$. This gives that 
\begin{align*}
\eta=\left( \dfrac{2w\xi}{m} \right)^2=\dfrac{P\xi^2h^2}{c^2\pi},
\end{align*}
where $c=0.95$, cf.\ \eqref{eq:comp_eta}.

Moreover, we need to set the oversampling factors $\sl $, $s_0$ and the local pad size $\nl$. We need to choose $\sl $ such that the error in computing the most difficult mode $k_3=\frac{2\pi}{L_3}\cdot 1$ is less than the error tolerance. Therefore using \eqref{eq:proof_I2d} and an error tolerance $\e$, we require that
\begin{align}
e^{-2\pi \sl }<\e.
\label{eq:choose_sl}
\end{align}
We also observe that $\nl $ and the grid size $M$ are related and in practice we can use
\begin{align}
M\approx 10 \nl.
\label{eq:choose_nl}
\end{align}
In summary, the process of selecting parameters in our algorithm is as follows. For a given error tolerance $\e$ and the splitting parameter $\xi$, $r_\c$ and $k_\infty$ are computed using \eqref{eq:trunc_error_estimate_real} and \eqref{eq:trunc_error_estimate_fourier}. Using \eqref{eq:approximation_error_oneterm}, $P$ is selected. Upsampling parameters $\sl $ and $\nl$ are chosen via \eqref{eq:choose_sl} and \eqref{eq:choose_nl} and $\sl $ is adjusted such that $\sl \tilde{M}=\sl (M+P)$ is integer. Since $s_0$ oversampling factor has a negligible effect on the total runtime, its selection is rather simple. We choose $s_0\approx 2.4$ such that $s_0\tilde{M}$ is also integer. Moreover, as discussed in section \ref{subsec:kspace}, $R=\sqrt{2}\tilde{L}$.

In the following example we compute as a reference, all the input parameters for a sample system with two decomposition parameters and two error tolerances.

\begin{example}
Consider a system of $N=120$ uniformly distributed particles located in a box of size $L=10$ with $Q=1$ and $\xi\in\lbrace 1.5,3\rbrace$. In table \ref{tab:example} we list the input parameters required to achieve absolute rms error below set tolerances $10^{-6}$ and $10^{-12}$ in evaluation of the $k$-space potential. As a comparison, the FFT requires $\approx3M^3\log(M)$ operations in the 3d-periodic case and approximately
\begin{align*}
  M^3\log(M)+4\nl \sl ^2\tilde{M}^2\log(\sl \tilde{M})+2s_0^2\tilde{M}^2\log(s_0\tilde{M})
\end{align*}
operations in the 1d-periodic case, cf.\ Algorithm \ref{alg:afft}.

\begin{table}[htbp]
\small
\begin{center}
  \begin{tabular}{ccccccccc}
	$\xi$ & $M$& $P$ & $\tilde{M}$ & $\nl$ & $\sl $ & $\sl \tilde{M}$ & $s_0\tilde{M}$ & rms error\\ 
    \hline 
	1.5 & 32 & 12 & 44 & 3 & 2   & 88  & 106 & $5.4\times10^{-7}$\\
     3  & 64 & 12 & 76 & 6 & 2.4 & 184 & 184 & $9.4\times10^{-7}$\\
    1.5 & 52 & 24 & 76 & 7 & 3.2 & 244 & 184 & $6.7\times10^{-13}$\\
	 3  & 96 & 24 & 120& 14& 3.9 & 468 & 288 & $8.3\times10^{-13}$\\
  \end{tabular}
\end{center}
\caption{Input parameters for a sample system of $N=120$ uniformly distributed particles in a box of size $L=10$, with $Q=1$ and $\xi=\lbrace1.5,3\rbrace$ to achieve single and double precision accuracy. Also $s_0\approx2.4$.}
\label{tab:example}
\end{table}
\end{example}


\section{Implementation of the SE1P method}
\label{sec:implementation}
\subsection{AFT/AIFT}
We now present an algorithm which accelerates the computation of mixed Fourier transforms introduced in \cite{Lindbo2012}. As has been stated before, our system of interest is periodic in $z$ and free in the $x$ and $y$ directions. Consider again the local pad set $\mathbb{I}$ \eqref{eq:I}, and the associated set $\mathbb{J}$ \eqref{eq:J}, local pad size $\nl$ and oversampling factors $\sl $ and $s_0$ introduced in section \ref{subsec:kspace}. Moreover, let $M$ be the number of grid points and let $k_3\in\frac{2\pi}{L_3}\lbrace-\frac{M}{2},\ldots,\frac{M}{2}-1\rbrace$ and $\kappa_1,\kappa_2\in\frac{2\pi}{s\tilde{L}}\lbrace-\frac{\tilde{M}}{2},\ldots,\frac{\tilde{M}}{2}-1\rbrace$, with $s$ defined in \eqref{eq:upsamplings}. The AFT algorithm to compute $\wh{H}$ from $H$ has the following steps.

{
\small
\begin{breakablealgorithm}
\caption{Adaptive Fourier transform - AFT}
\small
\label{alg:afft}
\begin{algorithmic}[1]
\INPUT Grid-representation of sources $H$, cf.\ \eqref{eq:spread_1p}, grid size $M$, oversampling factors $s_0, \sl $, and $\mathbb{I}$, $\mathbb{J}$ sets, cf.\ \eqref{eq:I} and \eqref{eq:J}.
\State Apply a 1D FFT on $H$ in the $z$ direction to compute $\wh{H}(x,y,k_3)$ with $\order{M^3\log(M)}$ arithmetic operations.
\State Pad $\wh{H}(x,y,0)$ with zeros in the $x$ and $y$ directions with oversampling factor $s_0$ and apply a 2D FFT to compute $\wh{H}(\kappa_1,\kappa_2,0)$, with $\order{2\nl \sl ^2\tilde{M}^2\log(\sl ^2\tilde{M}^2)}$ arithmetic operations.
\State Pad $\wh{H}(x,y,\mathbb{I})$ with zeros in the $x$ and $y$ directions with oversampling factor $\sl $ and apply a 2D FFT for $k_3\in\mathbb{I}$ to compute $\wh{H}(\kappa_1,\kappa_2,\mathbb{I})$ with $\order{2s_0^2\tilde{M}^2\log(s_0^2\tilde{M}^2)}$ arithmetic operations.
\State Apply a 2D FFT on $\wh{H}(x,y,\mathbb{J})$ to compute $\wh{H}(\kappa_1,\kappa_2,\mathbb{J})$ with $\order{M(M-2\nl -1)^2\log(M)}$ arithmetic operations.
\OUTPUT Adaptive Fourier transform of $H$, $\wh{H}$, cf.\ \eqref{eq:mft_trap}.
\end{algorithmic}
\end{breakablealgorithm}
}

Now, assume that $\wh{H}(\kappa_1,\kappa_2,0)$, $\wh{H}(\kappa_1,\kappa_2,\mathbb{I})$ and $\wh{H}(\kappa_1,\kappa_2,\mathbb{J})$, by virtue of applying the AFT algorithm, exist. The AIFT algorithm to compute $H$ from $\wh{H}$ has the following steps.
{
\small
\begin{breakablealgorithm}
\caption{Adaptive inverse Fourier transform - AIFT}
\small
\begin{algorithmic}[1]
\INPUT Scaled adaptive Fourier transformed Grid-representation of sources $\wh{\wt{H}}$, cf.\ \eqref{eq:scale_1p}, grid size $M$, oversampling factors $s_0, \sl $, and $\mathbb{I},\mathbb{J}$ sets  cf.\ \eqref{eq:I} and \eqref{eq:J}.
\State Apply a 2D IFFT on $\wh{H}(k_1,k_2,\mathbb{J})$ in the $x$ and $y$ directions to compute $\wh{H}(x,y,\mathbb{J})$.
\State Apply a 2D IFFT on $\wh{H}(k_1,k_2,\mathbb{I})$ in the $x$ and $y$ directions, restrict the solution to $M$ grid points in each direction to compute $\wh{H}(x,y,\mathbb{I})$.
\State Apply a 2D IFFT on $\wh{H}(k_1,k_2,0)$ in the $x$ and $y$ directions, restrict the solution to $M$ grid points in each direction to compute $\wh{H}(x,y,0)$.
\State Merge $\wh{H}(x,y,\mathbb{I})$, $\wh{H}(x,y,\mathbb{J})$ and $\wh{H}(x,y,0)$ to construct $\wh{H}(x,y,k_3)$.
\State Apply a 1D FFT on $\wh{H}(x,y,k_3)$ in the $z$ direction to compute $H(x,y,z)$.
\OUTPUT Adaptive inverse Fourier transform of $\wh{\wt{H}}$, $\wt{H}$, cf.\ \eqref{eq:imft_trap}.
\end{algorithmic}
\end{breakablealgorithm}
}

The schematic representation of the AFT/AIFT, in two dimensions is shown in figure \ref{fig:oversampling}. The extension of the original box can be interpreted as zero padding the box or the memory required to fit the oversampled Fourier transformed charge distributions. 
\begin{remark}
The results of the steps 2-4 in algorithm \ref{alg:afft} can be stored and scaled separately. This is different from the illustration in figure \ref{fig:oversampling}.
\end{remark}
\begin{figure}[htbp]
\centering \includegraphics[width=.5\textwidth]{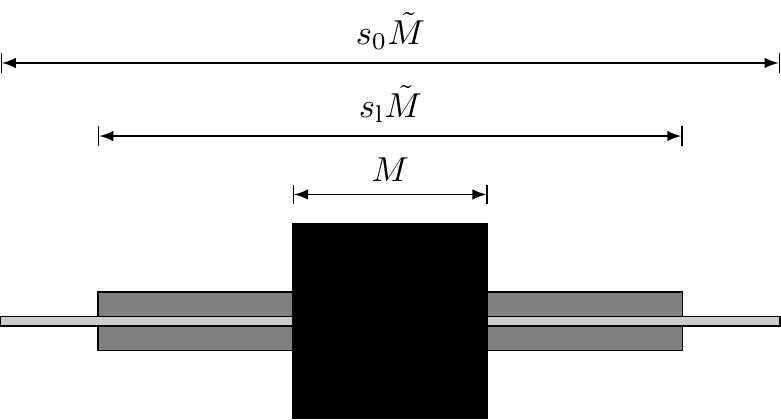}
\caption{Schematic representation of mixed Fourier Transforms in 2 dimensions.}
\label{fig:oversampling}
\end{figure}

To better illustrate the speed-up in the evaluation of FFTs, in table \ref{tab:mfft_mifft} we listed the runtime ratio between the computation of FFT/IFFT with $\sl =s_0$ and $\nl =k_\infty$, which we will refer to as \textit{plain upsampled FFT}, and AFT/AIFT for the cases $\sl \in\lbrace 2,4\rbrace$ and $\nl =8$. Using $\sl =2$, the ratio converges to approximately 2 and stays almost constant with respect to the grid size. For the case $\sl =4$ however, the ratio of the new algorithm to the plain upsampled FFT decreases to nearly 1:6. The role of adaptive Fourier transforms in lowering the cost of evaluating the FFT/IFFTs is more pronounced whenever the accuracy need, system size or Ewald parameter increase. Note that the AFT/AIFT algorithm has no effect on the gridding and gathering steps but it affects the runtime of the scaling step. 

\begin{table}[htbp]
\small
  \begin{center}
    \begin{tabular}{c|cccccccc}
      $M$ & 16&32&48&64&80&96&112&128\\
      \hline
      $\sl =2$ &0.78&1.43&1.77&2.23&2.28&2.51&2.64&2.51\\
      $\sl =4$ &1.23&2.43&3.36&4.65&5.06&5.99&6.85&6.30
    \end{tabular}
  \end{center}
  \caption{The FFT runtime ratio between the plain upsampled FFT and our new AFT algorithm. We used $\sl \in\lbrace 2,4\rbrace$ and $\nl =8$ for different grid sizes.}
  \label{tab:mfft_mifft}
\end{table}


\subsection{Fast Gaussian Gridding}
\label{subsec:fgg}
To increase the efficiency in computation of the exponential functions present in the gridding \eqref{eq:spread_1p} and gathering \eqref{eq:se1p_complete} steps, one can consider precomputing and reusing exponential functions. Assuming $M^3$ grid points and $N$ source points, each expression includes $NM^3$ number of $\exp(\cdot)$ evaluation which then can be reduced to $NP^3$ when the Gaussians are truncated to $P^3$ points in the support. But this is still expensive for large $N$ and high accuracy demands. Consider the evaluation of the expression 
\begin{align*}
e^{-\alpha(\bx-\bx_{\ni})^2}=e^{-\alpha(x-x_{\ni})^2}e^{-\alpha(y-y_{\ni})^2}e^{-\alpha(z-z_{\ni})^2},
\end{align*}
where $\bx_{\ni}$ is a source point and $\bx$ is located on a uniform grid. For simplicity we explain the procedure for the first term above. Since the expression is evaluated on a equispaced grid $x=ih$, $i=0,\ldots,(M-1)h$. We have
\begin{align*}
e^{-\alpha(ih-x_{\ni})^2}=f_1(i)\left[f_2(x_{\ni})\right]^if_3(x_{\ni})=e^{-\alpha(ih)^2}\left[e^{2\alpha h x_{\ni}}\right]^ie^{-\alpha x_{\ni}^2}.
\end{align*}
Clearly $f_1(i)$ is independent of $x_{\ni}$ and therefore can be computed, stored and reused. This operation, considering the evaluation in 3 dimensions, involves $M^3$ or after truncation $P^3$ evaluations of $\exp(\cdot)$ function. On the other hand, for each $x_{\ni}$, $f_2(x_{\ni})$ and $f_3(x_{\ni})$ are evaluated once and stored. Note that powers of $f_2$ can be simply computed by consecutive multiplication of the base $f_2(x_{\ni})$ by itself. These operations include $2N$ exponential evaluation and $P^3N$ multiplications. For the details of the implementation see \cite{Lindbo2011} and references therein. Hence, the SE1P method has a complexity of $\order{P^3N}+\order{M^3\log(M^3)}$. If we also account for the complexity of the real space sum, the grid size $M$ is tied to $N$ in the following way. If the simulation box size $L$ increases while the particle density $\frac{L^3}{N}$ is fixed, then $L\propto N^{1/3}$. Provided that $r_\textrm{c}$ and $\xi$ are kept fixed, the real space sum scales as $\order{N}$. Moreover, $M\propto L\propto N^{1/3}$ and therefore, $\order{M^3\log(M^3)}\propto\order{N\log(N)}$.


\section{Numerical results}
\label{sec:results}

In this section we present numerical results of computing the electrostatic potential and force with 1d-periodicity using the SE1P method. All the simulations are done on one core on a machine with Intel Core i7-3770 CPU which runs on 3.40 GHz with 8 GB of memory. The FFT/IFFT and scaling steps are done in MATLAB and the gridding and interpolation steps are written in C and are dynamically linked and called through the MATLAB MEX interface. The subroutines are written in C and are built with the GNU C Compiler at version 4.8.4. Our implementation is publicly available at {\cite{se_github}}. The package is accelerated with SIMD intrinsics and can be executed using OpenMP APIs. Moreover, the implementation allows for simulation of systems with non-cubic box shapes.

In example \ref{ex:wall}, error is measured using the absolute rms error defined in section \ref{sec:truncation_error}. In the other examples, we measure the relative rms error. In section {\ref{subsec:approximation_error}} we have presented formulas for the absolute errors, however we can approximate the magnitude of the potential and force (cf. figure {\ref{fig:se1p_approx_error}}), and hence can obtain estimates for relative errors. Therefore, parameters $M$ and $P$ can be computed using the absolute error formulas and approximate magnitude of the solution. Moreover, $s_0$ is kept fixed and $\nl $ and $\sl $ are optimized for set error tolerances in each example.

In the first example, (example \ref{ex:total}), we will consider the computation of the full potential. However, since the real space component stays essentially the same while changing the periodicity of the system, in the other examples we shall only consider the evaluation of the Fourier space part. 

\begin{example}
\label{ex:total}
We present here the total runtime of evaluating an approximation to the electrostatic potential {\eqref{eq:full_potential}} with 1d-periodicity for different system sizes $N$ and with a relative rms error of $\approx 2\times10^{-6}$. The real space component is computed using cell lists and the Fourier space part with the presented algorithm. In this example, we scale up the system such that the particle density $\frac{N}{L^3}$ stays constant. To choose $\xi$, a simple approach is to balance the runtime of the real space and Fourier space components of the Ewald sum for a moderately large system. With this approach we find $\xi=3.5$ to be an almost optimal value. Now one can follow the recipe given in section \ref{subsec:parameter_selection} to obtain the other parameters. We find $P=16$, $r_\c=0.9$, and $\sl =3$. Also $s_0\approx2.4$. Note that $P$, $s_0$ and $\sl $ are fixed since they depend only on the accuracy and not the system size. To keep the number of near neighbors fixed in evaluating the real space component, the cut-off radius $r_\c$ is kept constant. Considering a fixed $\xi$, the parameters $M$ and $\nl $ are functions of both accuracy and $L$. The total runtime of computing the potential and parameters to scale up the system are given in figure \ref{fig:real_fourier}.

\vspace{.5cm}
\begin{minipage}{0.49\textwidth}
  \centering 
  \includegraphics[width=1\textwidth,height=.8\textwidth]{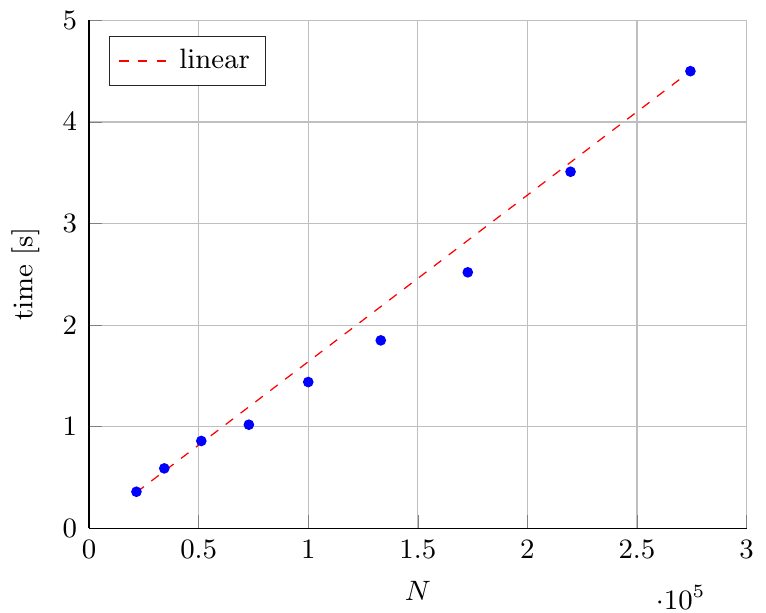}
\end{minipage}
\begin{minipage}{0.49\textwidth}
  \small
  \centering
  \begin{tabular}{rcccc}
    $N~~$& $L$ & $M$ & $\nl $\\
    \hline
    21\,600  & 6 & 46 & 4\\
    34\,300  & 7 & 54 & 5\\
    51\,200  & 8 & 62 & 6\\
    72\,900  & 9 & 64 & 6\\
    100\,000 & 10 & 80 & 6\\
    133\,100 & 11 & 84 & 7\\
    172\,800 & 12 & 92 & 8\\
    219\,700 & 13 & 100 & 9\\
    274\,400 & 14 & 108 & 10\\
  \end{tabular}
\end{minipage}
\captionof{figure}{(Left) Total runtime (real and Fourier space) in computing \eqref{eq:se1p_complete_discretized} using the SE1P algorithm. (Right) Parameters used to obtain the figure. Also $\sl =3$, $s_0\approx2.4$, $r_\c=0.9$, and $P=16$.}
  \label{fig:real_fourier}
\end{example}

\begin{example}
\label{ex:time_particle}
In this example, our aim is to study the behavior of different parts of the algorithm. We generate random systems of uniformly distributed particles with constant density $\frac{N}{L^3}=125$. This system is slightly more dense than the system in the previous example. We start with $N=20\,000$ and $L=5.43$ and scale up the system such that the number density remains constant. We set $\xi=4$ and plot the  runtime (figure \ref{fig:time_particle} (left)) and per-particle runtime (figure \ref{fig:time_particle} (right)) of computing the $k$-space component of the potential as a function of number of particles to achieve relative rms errors less than $10^{-5}$ and $10^{-9}$. Referring back to section \ref{subsec:fgg}, the computational complexity of the gridding and interpolation steps is of order $P^3N$ and FFT/IFFT is of order $N\log{N}$. Since the computations in this example are dominated by the gridding and interpolation steps, the time-particle plots for both tolerances scale linearly.

In figure \ref{fig:time_particle_detail}, we plot the runtime and per-particle runtime of the same systems in detail presenting the FFT/IFFT and scaling steps. The gridding and interpolation steps are not shown in this figure for clarity reasons and since their behaviors are similar to the total runtime in figure \ref{fig:time_particle}. The parameters $M$ and $P$ are obtained from \eqref{eq:kinf} and \eqref{eq:approximation_error_oneterm} respectively, $\sl =2$ for $\e=10^{-5}$ and $\sl =3$ for $\e=10^{-9}$. The other parameters used in this example are listed in table \ref{tab:params}.
\end{example}

\begin{figure}[htbp]
  \begin{subfigure}[b]{0.49\textwidth}
    \centering 
    \includegraphics[width=\textwidth,height=.8\textwidth]{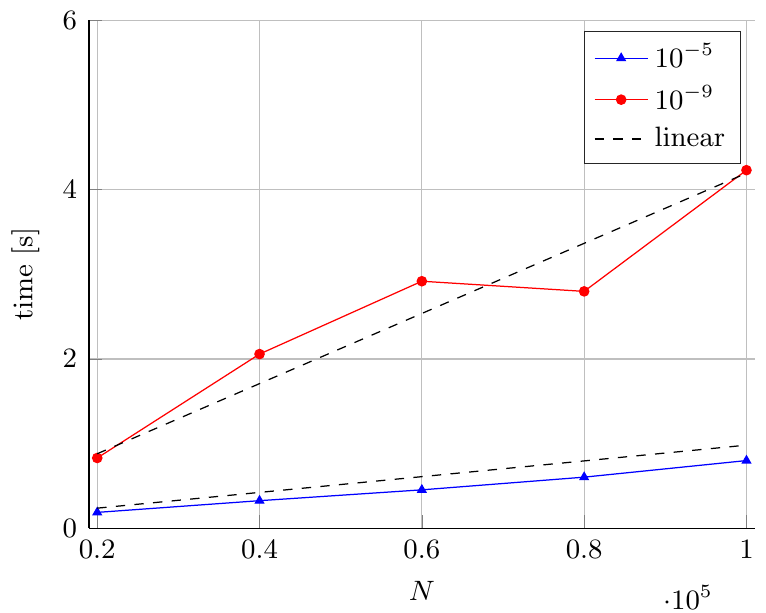}
  \end{subfigure}
  \begin{subfigure}[b]{0.49\textwidth}
    \centering 
    \includegraphics[width=\textwidth,height=.82\textwidth]{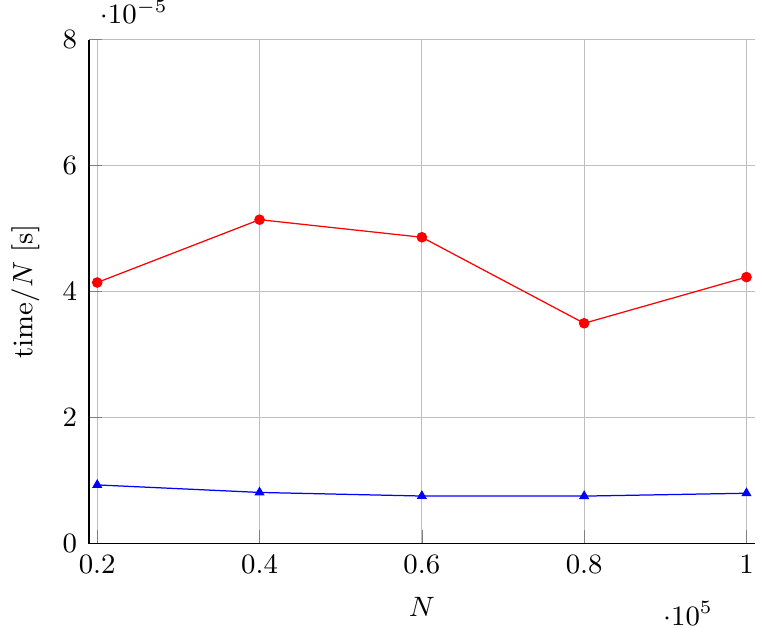}
  \end{subfigure}
  \caption{The $k$-space component runtime (left) and per-particle runtime (right) vs.\ number of particles to achieve relative rms errors less than $10^{-5}$ (blue) and $10^{-9}$ (red). Randomly generated systems with number density $\frac{N}{L^3}=125$ and $\xi=4$ are used. The parameters used in this example are listed in table \ref{tab:params}.}
  \label{fig:time_particle}
\end{figure}

\begin{figure}[htbp]
  \begin{subfigure}[b]{0.49\textwidth}
    \centering 
    \includegraphics[width=\textwidth,height=.8\textwidth]{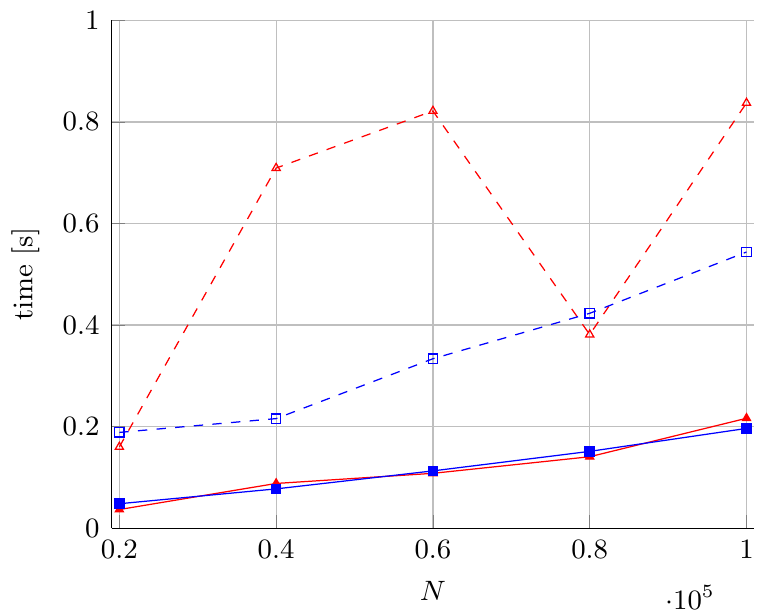}
  \end{subfigure}
  \begin{subfigure}[b]{0.49\textwidth}
    \centering 
    \includegraphics[width=\textwidth,height=.82\textwidth]{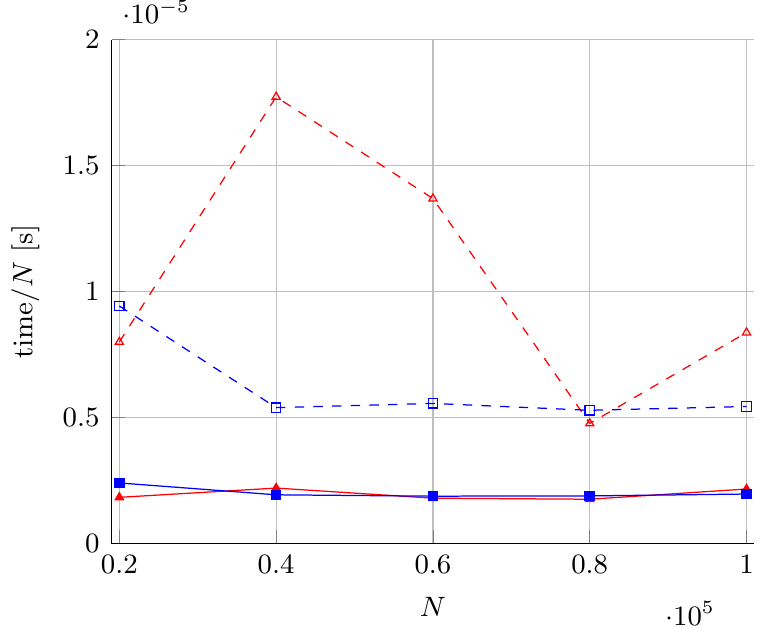}
  \end{subfigure}
  \caption{Runtime (left) and per-particle runtime (right) of FFT/IFFT ($\red\triangle$) and scaling ($\blue\square$) steps vs.\ number of particles to achieve relative rms errors less than $10^{-5}$ (solid) and $10^{-9}$ (dashed) respectively. Randomly generated systems with number density $\frac{N}{L^3}=125$ and $\xi=4$ are used. The parameters used in this example are listed in table \ref{tab:params}.}
  \label{fig:time_particle_detail}
\end{figure}

\begin{table}[htbp]
\small
\begin{center}
\begin{tabular}{cccccc}
\hline 
\multirow{2}{*}{$N$} & \multirow{2}{*}{$L$} & \multicolumn{2}{c}{$\e=10^{-5}$} & \multicolumn{2}{c}{$\e=10^{-9}$} \\\cline{3-6}
& & $M$ & $\nl $ & $M$ & $\nl $ \\
\hline
20\,000 & 5.43 & 52 & 7&  68  & 10\\ 
40\,000 & 6.84 & 64 & 9&  82  & 11\\ 
60\,000 & 7.83 & 76 & 10& 94  & 13\\ 
80\,000 & 8.62 & 88 & 10& 104 & 14\\ 
100\,000 & 9.29& 96 & 12& 114 & 15\\
\end{tabular}
\end{center}
\caption{List of the parameters for examples \ref{ex:time_particle} and \ref{ex:1p3p} to obtain figures \ref{fig:time_particle}-\ref{fig:time_particle_detail} and \ref{fig:1p3p_tole5}-\ref{fig:1p3p_tole10_scale}. For tolerances $10^{-5}$ and $10^{-9}$ and  $\xi=4$, the cut-off radius is $r_\c=0.781$ and $1.08$, $\sl =2,3$ and $P=12,24$ respectively. $L$ is computed such that $\frac{N}{L^3}=125$ is satisfied.}
\label{tab:params}
\end{table}

\begin{example}
\label{ex:spectral}
In this example we consider a uniform system of $N=10$ particles with $L=1$ and we set $\xi=8$. We compute the force \eqref{eq:force_se1p} using the SE1P method. The grid size $M$ is chosen large enough such that the truncation error of the $k$-space sum is negligible. In figure \ref{fig:se1p_force} (left) we plot the relative rms error in computing the force as a function of $P$ for different oversampling factors. The figure shows that with $\sl =4$,  sufficiently large $\nl $ (5 in this case) and $P=24$ machine precision accuracy can be achieved. For $\sl =1$, no oversampling is made and therefore there is no need to choose $\nl $. In figure \ref{fig:se1p_force} (right) we show how the error decreases quickly as $\nl $ increases. In this figure the system and parameters are the same as in figure \ref{fig:se1p_force} (left) and $\sl =4$ is fixed. As we explained before, for the case of $\sl =1$ we still need to extend the computational domain to accommodate the support of the truncated Gaussians. This extension yields a bit of oversampling and therefore low error tolerances can still be achieved also with $\sl =1$.

\end{example}
\begin{figure}[htbp]
  \begin{subfigure}[b]{0.5\textwidth}
    \centering  \includegraphics[width=\textwidth,height=.8\textwidth]{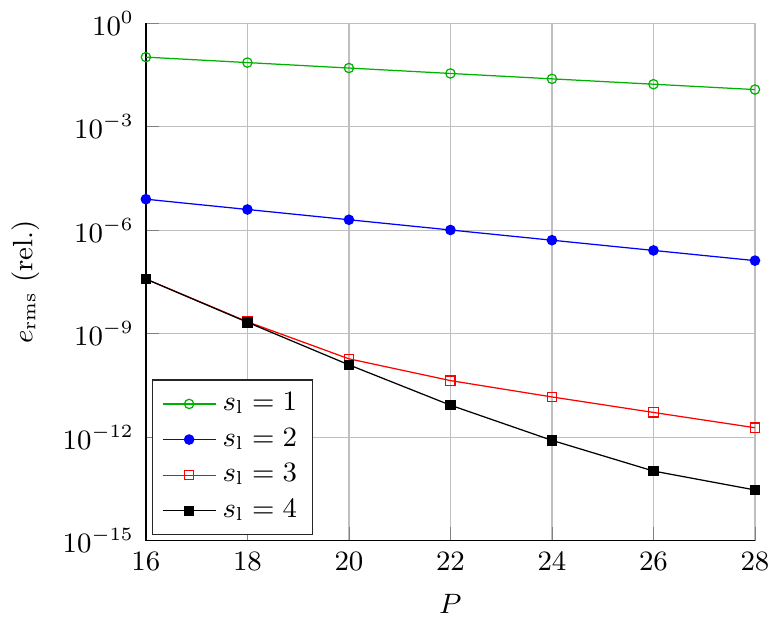}
\end{subfigure}
  \begin{subfigure}[b]{0.5\textwidth}
    \centering  \includegraphics[width=\textwidth,height=.8\textwidth]{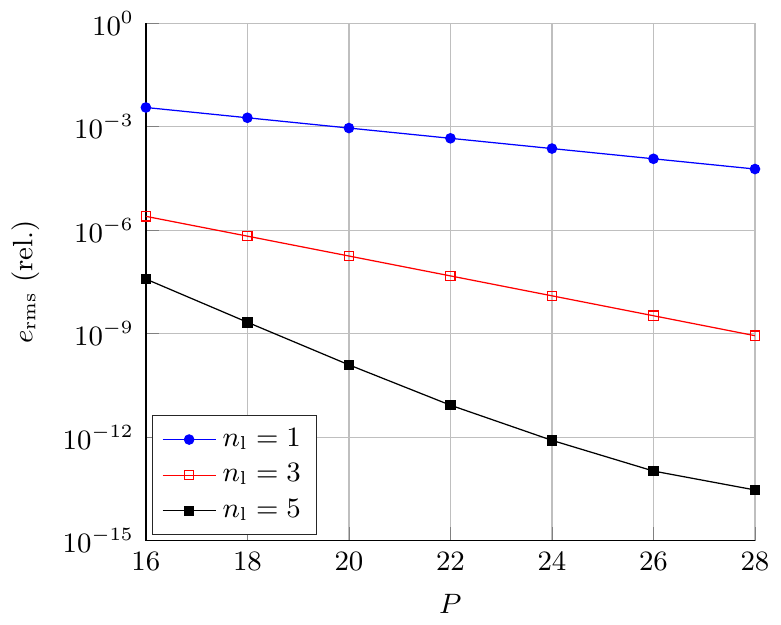}
\end{subfigure}
\caption{The rms error in computation of the force as a function of $P$ using the 1d-periodic spectral Ewald method. A uniform system of $N=10$ particles and $L=1$ with $\xi=8$ and $M=38$ is used. (Left) $\sl \in\lbrace1,2,3,4\rbrace$ and $\nl =6$. (Right) $\sl =4$ and $\nl \in\lbrace1,3,5\rbrace$.}
\label{fig:se1p_force}
\end{figure}

\begin{example}{\it (Finite size effect)}
\label{ex:finite_size}
Due to the inhomogeneity of the system in the non-periodic directions, a finite size effect may occur in evaluation of the $k$-space sum. This effect can be well quantified via different test cases that are well known to study such behavior (see \cite{{Lindbo2012},{DeJoannis2002d},{Widmann1997}}).
\subsubsection*{Case 1.}
We consider a system of $N=500$ oppositely charged particles and $L=1$ and evaluate the point-wise error of the force $k$-space sum for different oversampling factors $\sl $ as a function of the distance from the nearest box edge in $xy$-plane. We set $\xi=8$, $M=32$, $P=28$ and $\nl =4$. Figure \ref{fig:test_xy_accuracy} (left) shows that the point-wise error does not depend on the particle distance from the box edges. The error is computed as $\frac{1}{3}\sqrt{e_{x_1}^2+e_{x_2}^2+e_{x_3}^2}$, where $e_{x_i}$ is the point-wise absolute error in the $x$-direction.
\subsubsection*{Case 2.}
We use a system of $N=2$ oppositely charged particles with charges $q_i=\pm1$ and $L=1$. We let one of the particles be fixed at $(\frac{L_1}{2},\frac{L_2}{2},0)$ and the other particle move along the diagonal of the $xy$-plane, i.e., $x=y$, with $z=0.1$. We choose $L=1$, $\xi=8$, $M=32$, $P=28$, and $\nl =4$. Figure \ref{fig:test_xy_accuracy} (right) shows that the point-wise error in computing the force $k$-space sum for the free particle increases slightly as it gets far from the other particle. The error is computed as in the previous case.
\begin{figure}[htbp]
  \centering 
  \begin{subfigure}[b]{0.49\textwidth}
    \centering 
    \includegraphics[width=\textwidth,height=.8\textwidth]{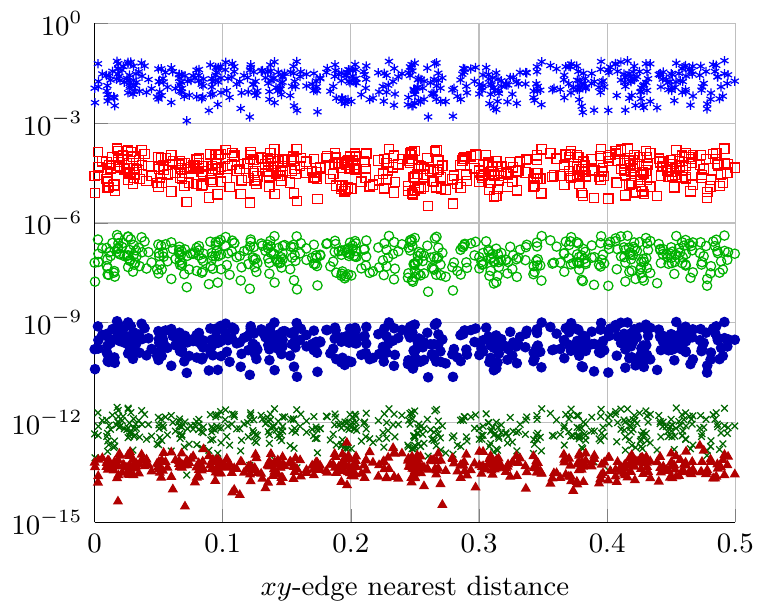}
\end{subfigure}
\hfill
  \begin{subfigure}[b]{0.49\textwidth}
    \centering 
    \includegraphics[width=\textwidth,height=.8\textwidth]{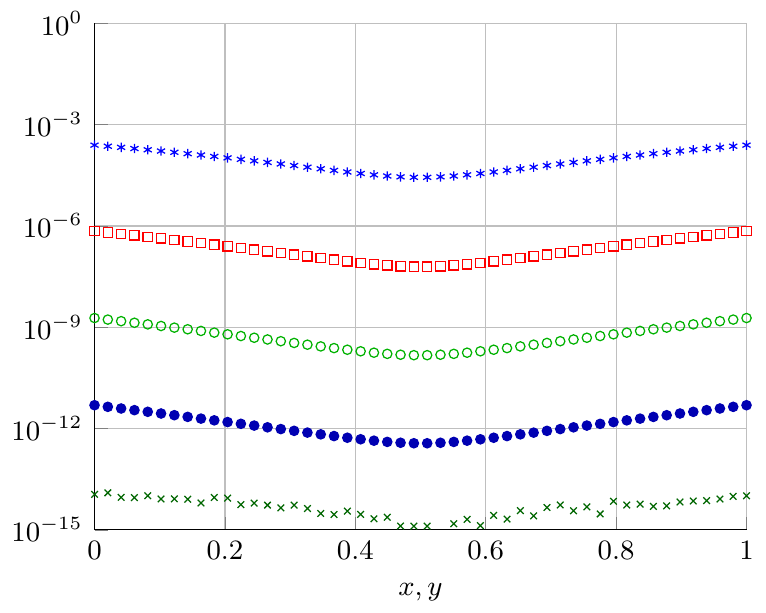}
\end{subfigure}
\caption{(Left) Point-wise error in the computation of the force $k$-space sum does not depend on the $xy$-edge nearest distance. (from top to bottom) $\sl \in\lbrace1,1.5,2,2.5,3,3.5\rbrace$. Also $N=500$. (Right) Point-wise error of the free particle in the two-particle system shows a small growth as it departs from the fixed one. (from top to bottom) $\sl \in\lbrace1,1.5,2,2.5,3\rbrace$. The grid size and $P$ are large enough such that the truncation error and approximation error of the Gaussians are negligible. In both plots we choose $L=1$, $\xi=8$, $M=32$, $P=28$ and $\nl =4$.}
\label{fig:test_xy_accuracy}
\end{figure}

\end{example}

\begin{example}{\it (Efficiency compared to the triply periodic case)}
\label{ex:1p3p}
In this example we compare the behavior of the SE1P and SE3P methods \cite{Lindbo2011} to achieve relative rms errors less than $10^{-5}$ and $10^{-9}$. We use the same systems and parameters as in example \ref{ex:time_particle} and compare only the $k$-space part of the potential. Figure \ref{fig:1p3p_tole5} (right) shows the $k$-space runtime comparison of the SE1P and SE3P methods to achieve an error less than $10^{-5}$. Evidently, the runtime ratio of algorithms remains approximately constant around 1.5 while we increase the system size. In figure \ref{fig:1p3p_tole5} (left) we plot the FFT/IFFT and scaling steps runtime comparison for both methods. The fluctuations in the FFT/IFFT curves are due to the fact that the FFT routine is more efficient for some grid sizes. In figures \ref{fig:1p3p_tol5_scale} (left and right) we plot the total and detailed per-particle runtime of the same systems and at a relative error level below $10^{-5}$. As figures suggest, for larger systems, both methods are efficient, i.e., per-particle runtime stays almost constant as the system size grows.

A similar experiment has been conducted for the error tolerance $10^{-9}$ and the results are shown in figures \ref{fig:1p3p_tole10} and \ref{fig:1p3p_tole10_scale}. Again the results show that the runtime ratio of the 1d- to 3d-periodic cases stays almost constant. To achieve this accuracy, $\sl =3$ and at most $2\nl +1=31$ Fourier modes are oversampled.
\begin{figure}[htbp]
  \begin{subfigure}[b]{0.49\textwidth}
    \centering 
    \includegraphics[width=\textwidth,height=.8\textwidth]{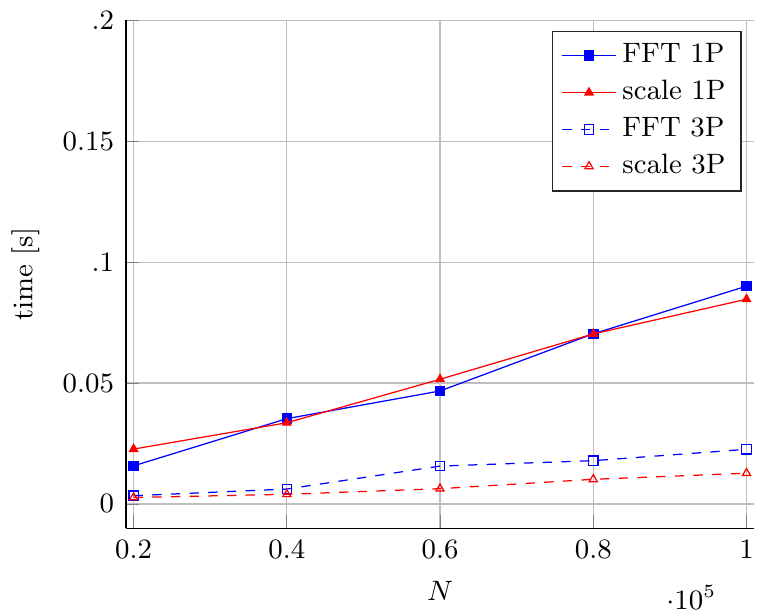}
  \end{subfigure}
  \begin{subfigure}[b]{0.49\textwidth}
    \centering 
    \includegraphics[width=\textwidth,height=.8\textwidth]{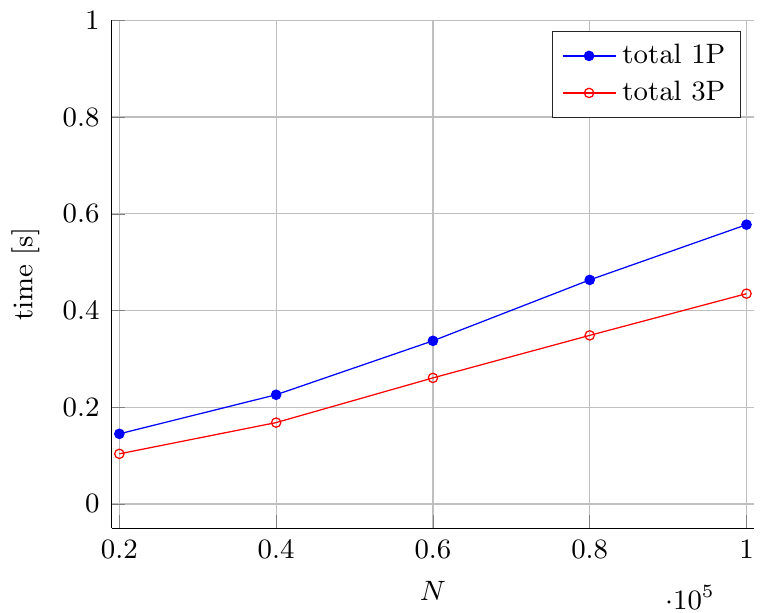}
  \end{subfigure}
  \caption{Runtime comparison of 1d- and 3d-periodic $k$-space sum vs. number of particles to achieve a relative rms error below $10^{-5}$. (Left) FFT/IFFT and scaling steps runtime. (Right) total runtime. We used $\xi=4$. The parameters used in this example are listed in table \ref{tab:params}.}
  \label{fig:1p3p_tole5}
\end{figure}
\begin{figure}[htbp]
  \begin{subfigure}[b]{0.49\textwidth}
    \centering 
    \includegraphics[width=\textwidth,height=.8\textwidth]{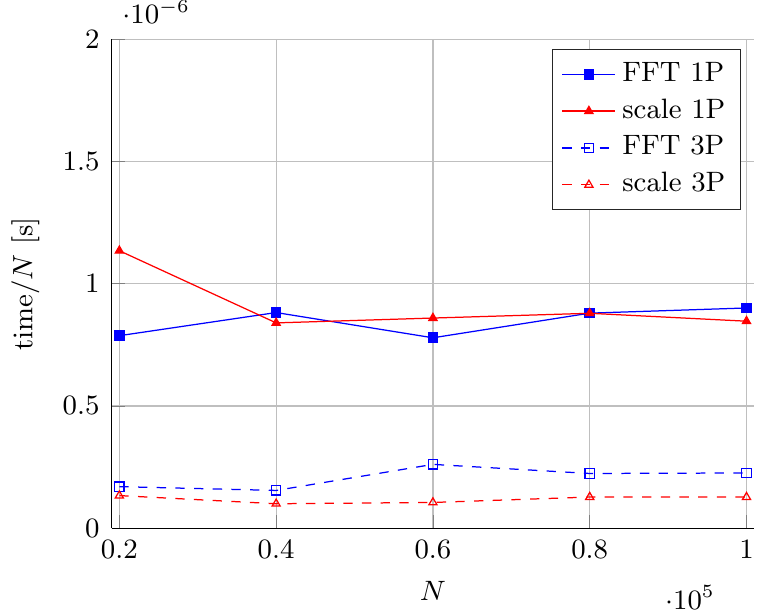}
\end{subfigure}
  \begin{subfigure}[b]{0.49\textwidth}
    \centering 
\includegraphics[width=\textwidth,height=.8\textwidth]{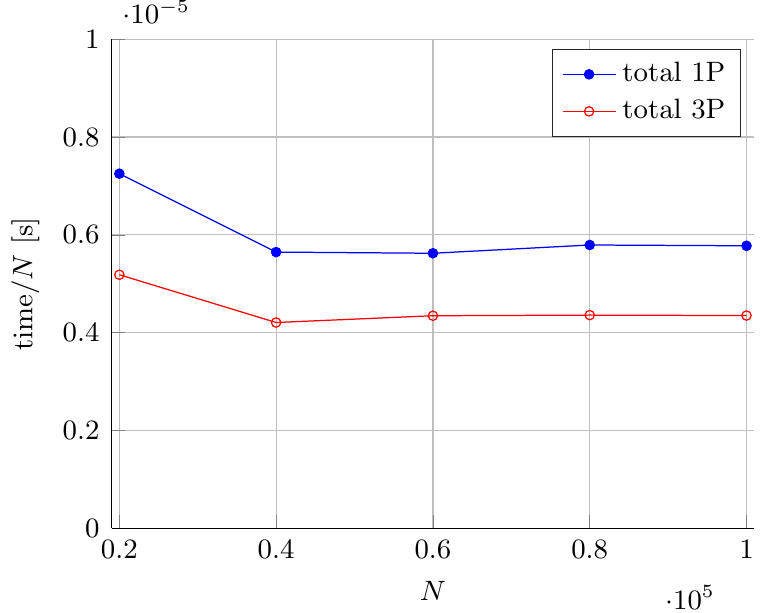}
\end{subfigure}
\caption{Per-particle runtime comparison of 1d- and 3d-periodic $k$-space sum vs. number of particles to achieve a relative rms error below $10^{-5}$. (Left) FFT/IFFT and scaling runtime. (Right) total runtime. We used $\xi=4$. The parameters used in this example are listed in table \ref{tab:params}.}
\label{fig:1p3p_tol5_scale}
\end{figure}
\begin{figure}[htbp]
  \begin{subfigure}[b]{0.49\textwidth}
    \centering 
    \includegraphics[width=\textwidth,height=.8\textwidth]{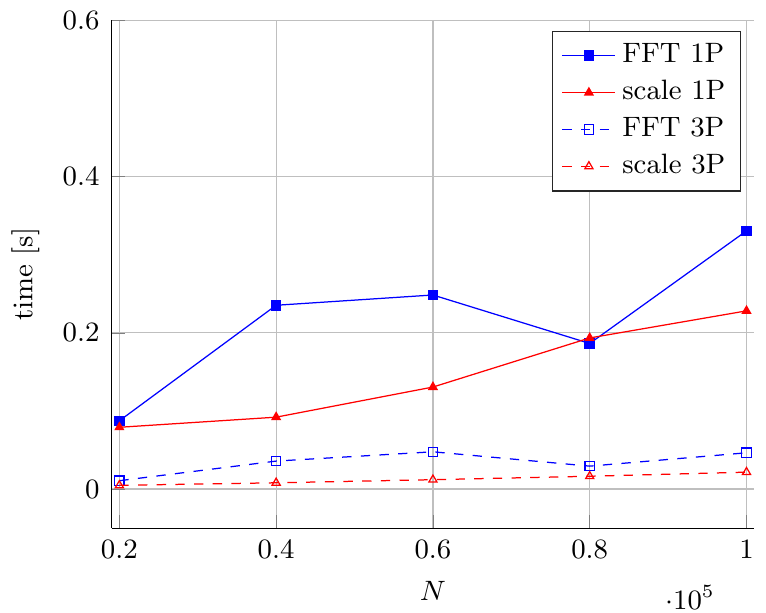}
  \end{subfigure}
  \begin{subfigure}[b]{0.49\textwidth}
    \centering 
    \includegraphics[width=\textwidth,height=.8\textwidth]{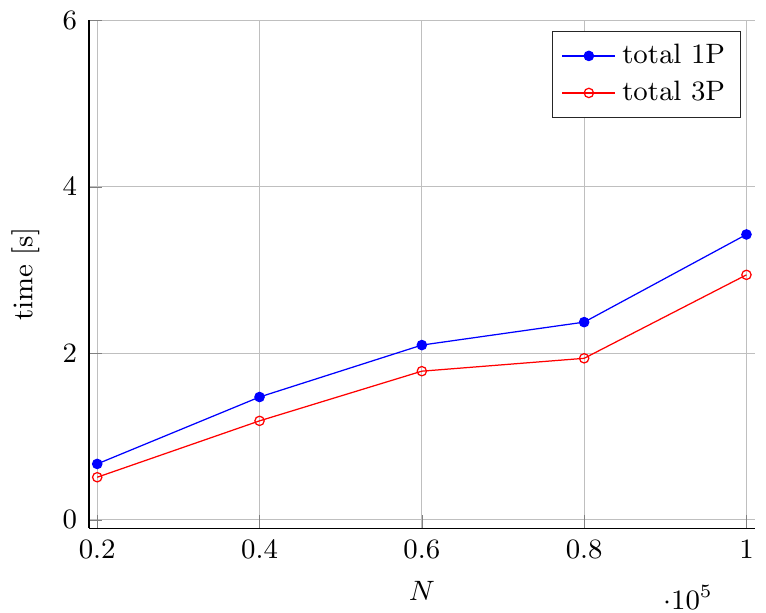}
  \end{subfigure}
  \caption{Runtime comparison of 1d- and 3d-periodic $k$-space sum vs. number of particles to achieve a relative rms error below $10^{-9}$. (Left) FFT/IFFT and scaling runtime. (Right) total runtime. We used $\xi=4$. The parameters used in this example are listed in table \ref{tab:params}.}
  \label{fig:1p3p_tole10}
\end{figure}
\begin{figure}[htbp]
  \begin{subfigure}[b]{0.49\textwidth}
    \centering 
    \includegraphics[width=\textwidth,height=.8\textwidth]{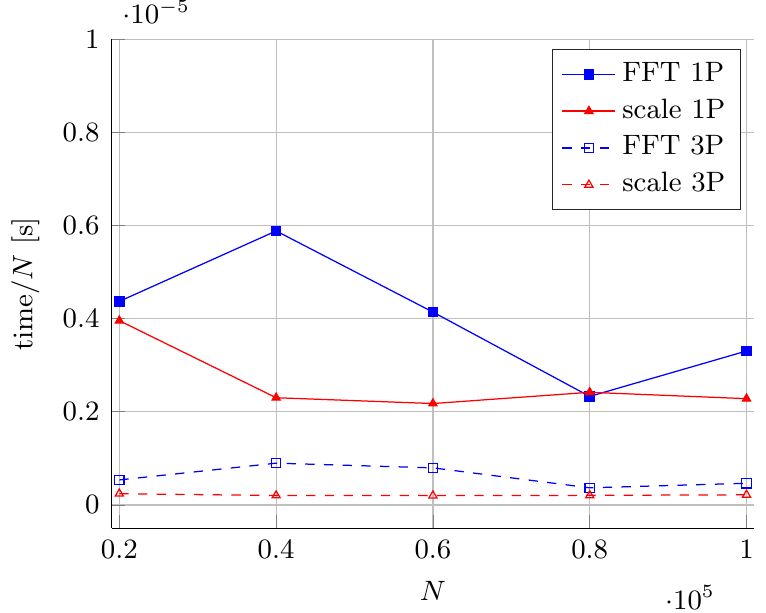}
\end{subfigure}
  \begin{subfigure}[b]{0.49\textwidth}
    \centering 
    \includegraphics[width=\textwidth,height=.8\textwidth]{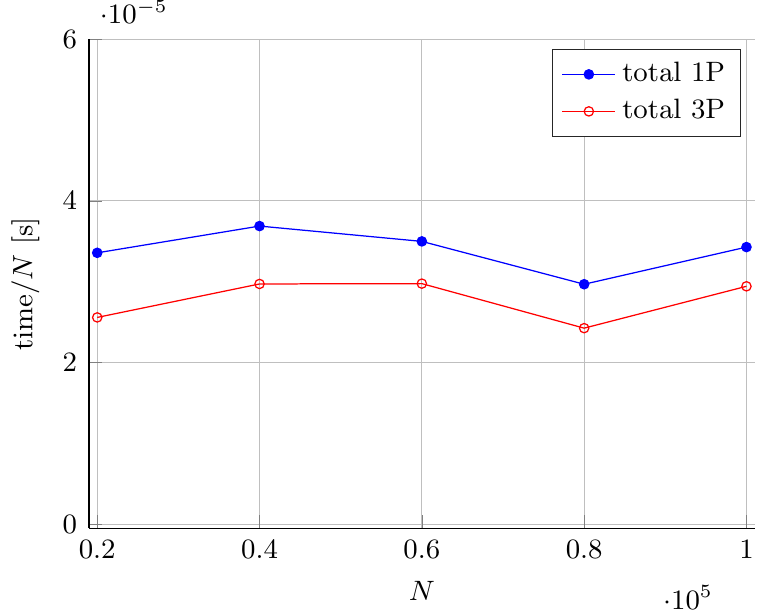}
\end{subfigure}
\caption{Per-particle runtime comparison of 1d- and 3d-periodic $k$-space sum vs. number of particles to achieve a relative rms error below $10^{-9}$. (Left) FFT/IFFT and scaling runtime. (Right) total runtime. We used $\xi=4$. The parameters used in this example are listed in table \ref{tab:params}.}
\label{fig:1p3p_tole10_scale}
\end{figure}
\end{example}

\begin{example}
\label{ex:wall}
In this example, we present a runtime comparison of the SE1P and SE3P method using a cloud-wall system (see figure \ref{fig:cloud_wall}) introduced in \cite{Arnold2013}. This system is constructed artificially to create a strong long-range interaction. First we consider a system of $N=300$ oppositely charged particles located in a box of size $L=10$. Then the system is scaled up such that the particle density is constant $\frac{N}{L^3}=0.3$. We compute the absolute rms error in the evaluation of the electrostatic force using both methods and compare it with the results obtained in \citep{Nestler2015} (figure 5.5). The parameters $N$, $L$, $M$ and $\xi$ are the same as in \citep{Nestler2015} and are listed in table \ref{tab:cloud_wall}. The error committed in both methods is $\approx2\times10^{-5}$. In figure \ref{fig:compare_1p3p_nestler} (left) we compare the runtime of the Fourier space sum with 1d- and 3d- periodicity when the system grows. The figure again confirms the effectiveness of the SE1P method. In figure \ref{fig:compare_1p3p_nestler} (right) we plot the relative runtime of the 1d-periodic and 3d-periodic cases using the Spectral Ewald method together with the relative runtime of the NFFT-based method reported in \cite{Nestler2015}. We emphasize here that since the results in \cite{Nestler2015} is obtained using a different computer than ours, direct runtime comparison is not feasible. However, it is still reasonable to compare the cost for 1d-periodic and 3d-periodic systems for each method separately. The SE1P and SE3P methods only differ in the scaling and the FFT/IFFT steps and since the oversampling is only applied on $20\%$ of the grid, the FFT/IFFT cost is almost the same for both methods. Moreover, the memory requirement is significantly smaller in the SE1P method compared to the 1d-periodic NFFT-based method. This is evident from figure \ref{fig:compare_1p3p_nestler} (right) in which a huge increase occurs in the runtime of the NFFT-based method for $N=1\,228\,800$.

\begin{figure}[htbp]
\centering \includegraphics[width=.35\linewidth]{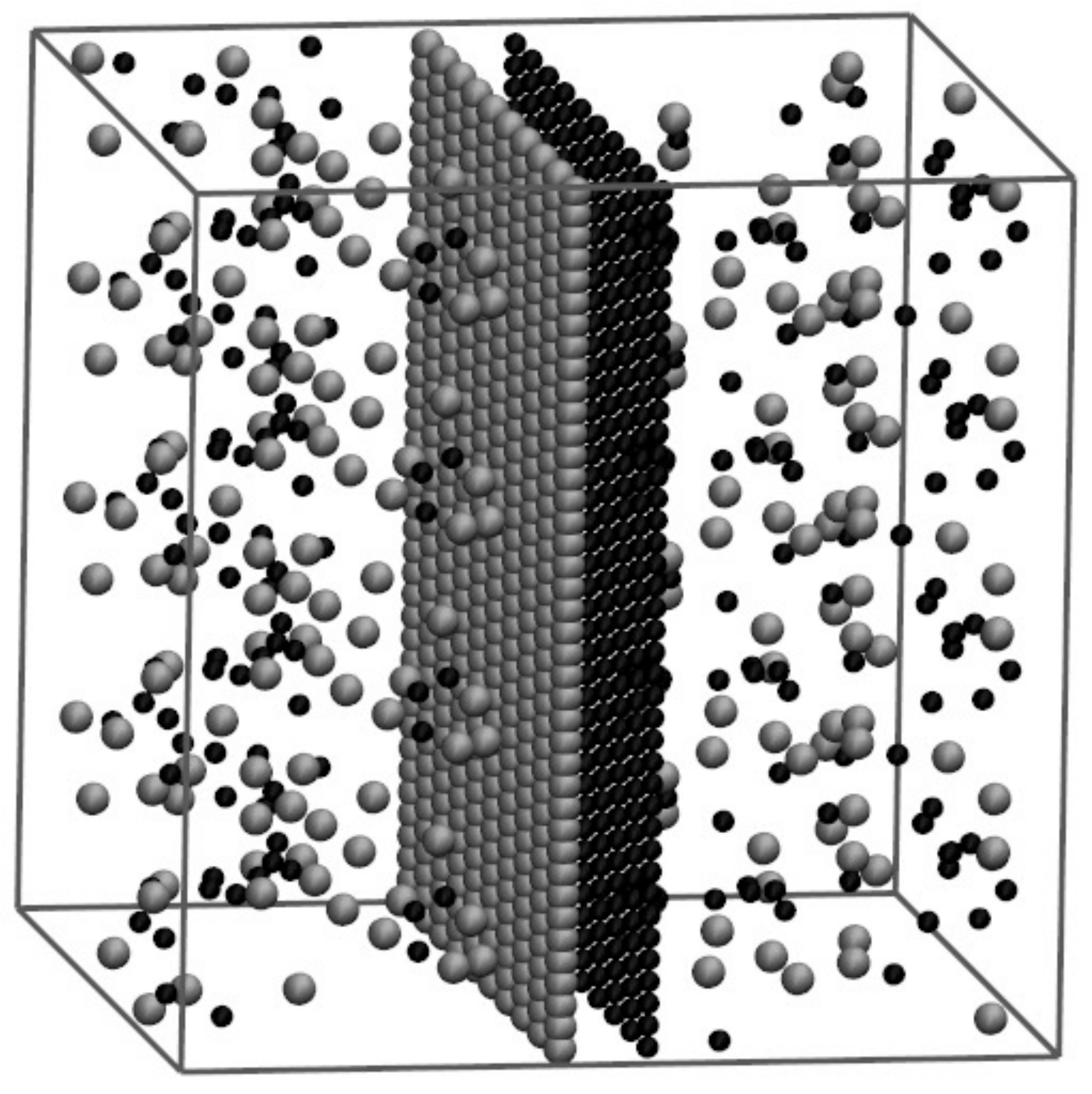}
\caption{A cloud-wall system of particles (borrowed from \cite{Arnold2013}) used in example \ref{ex:wall}.}
\label{fig:cloud_wall}
\end{figure}
\begin{figure}[htbp]
  \begin{subfigure}[b]{0.5\textwidth}
    \centering  \includegraphics[width=\textwidth,height=.8\textwidth]{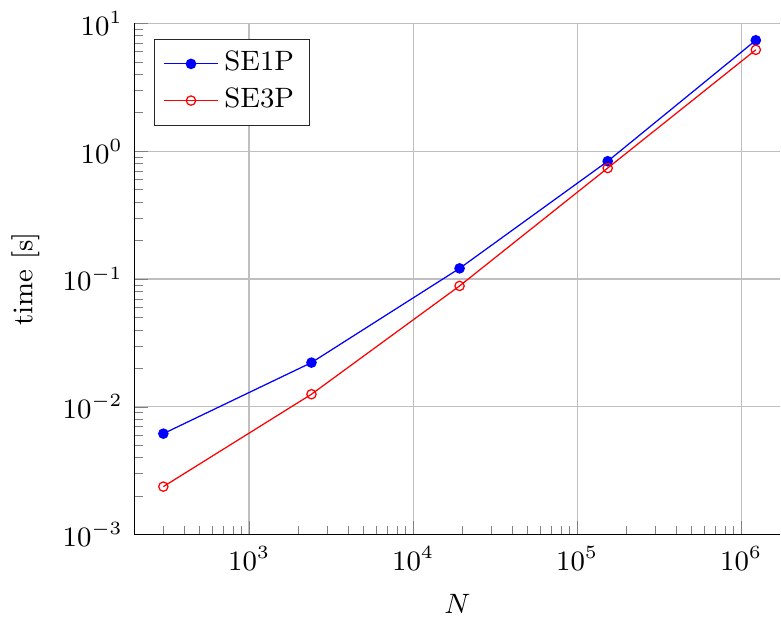}
  \end{subfigure}
  \begin{subfigure}[b]{0.49\textwidth}
    \centering  \includegraphics[width=\textwidth,height=.8\textwidth]{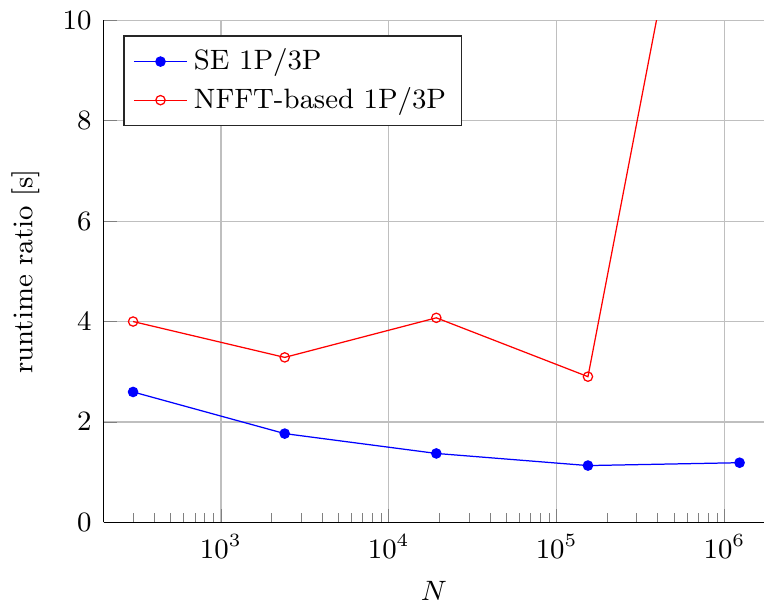}
  \end{subfigure}
  \caption{(Left) Runtime comparison of the 1d- and 3d-periodic $k$-space sums vs. number of particles to achieve rms error of $\approx 2\times 10^{-5}$. (Right) Comparison of the runtime ratio of the 1d- and 3d-periodic $k$-space sums using Spectral Ewald method and the NFFT-based method \cite{Nestler2015}. The parameters used in this example are listed in table \ref{tab:cloud_wall}.}
  \label{fig:compare_1p3p_nestler}
\end{figure}

\begin{table}[!ht]
\small
\begin{center}
\begin{tabular}{rrrccc}
\hline
$N$ & $L$ & $M$ & $\nl $ & $\sl $ & rms error \\ 
\hline
300         &  10 & 16  & 1  & 2 & 2.274e-05 \\ 
2\,400      &  20 & 32  & 2  & 2 & 1.963e-05 \\ 
19\,200     &  40 & 64  & 4  & 2 & 1.717e-05 \\ 
153\,600    &  80 & 128 & 8  & 2 & 1.561e-05 \\ 
1\,228\,800 & 160 & 256 & 16 & 2 & 1.479e-05 \\
\end{tabular}
\end{center}
\caption{List of the parameters used in example \ref{ex:wall}. The rms error in the SE1P and SE3P methods is $\approx2\times10^{-5}$ and $P=12$, $\xi\approx0.7186$ and $s_0\approx 2.4$.}
\label{tab:cloud_wall}
\end{table}
\end{example}


\section{Summary and conclusions}
We develop a fast and accurate algorithm to compute the electrostatic potential, force and energy for three dimensional systems of charged particles under singly periodic boundary conditions. The method is based on the Ewald summation formula and follows the general framework of other Particle mesh Ewald (PME) methods with an FFT treatment of the Fourier sum. Specifically, this work is an extension of the Spectral Ewald method that has been developed for triply periodic (SE3P, \cite{Lindbo2011}) and doubly periodic (SE2P, \cite{Lindbo2012}) boundary conditions.
By using suitably scaled Gaussians, approximation errors can be decoupled from truncation errors, and the needed size of the FFT grid in any periodic direction is determined by the actual Ewald sum. Controlling the approximation errors this way, the method is spectrally accurate, meaning that errors decay exponentially as the grid size increases.

FFT based methods, like the Spectral Ewald method, are most efficient for the triply periodic case where FFTs can be used in all directions without oversampling. To resolve the Fourier integrals that appear in any non-periodic direction, an upsampling of the FFT grid in the non-periodic direction is needed, and in the SE2P method it was done for all modes in the periodic directions. A plain upsampling like this of two non-periodic directions for the singly periodic problem would yield a substantial extra cost as compared to the triply periodic problem.  However, in this paper we have shown that it is sufficient to apply the upsampling in the non-periodic directions on about $20\%$ of the Fourier modes in the periodic direction. We establish an adaptive Fourier transform to apply the upsampling on a small set of Fourier modes, and with this, the cost of approximating the Fourier integrals reduces significantly. The same level of accuracy can still be achieved, provided that the local upsampling factor and the number of upsampled modes are chosen properly.

There is a term in the singly periodic Ewald sum associated with the zero wave number in the periodic direction, which evaluated directly would yield a cost of $\order{N^2}$.
To treat this zero mode term, we integrate the method proposed by Vico \etal \cite{Vico2016} for solving the free space Poisson’s equation into our framework. This zero wave number contribution then only requires a special scaling in Fourier space, and can be integrated into our adaptive Fourier transform at a negligible cost.

A typical increase in cost of the FFTs performed in the 1d-periodic as compared to the 3d-periodic case is a factor of 2-3. The gridding cost (evaluating Gaussians in the gridding and gathering steps of the algorithm) is essentially the same in both cases.
The ratio of the total runtime cost for the SE1P method introduced here and the SE3P method is therefore even smaller. From published results, we can obtain the cost ratio of computing the 1d-periodic to 3d-periodic potential by the NFFT-based method proposed by Nestler \etal \cite{Nestler2015}, and we can note that the cost ratio for our method is smaller. Furthermore, our algorithm is relatively simpler for implementation and is more efficient in terms of memory requirement.

The method proposed here can be extended to other applications. The spectral Ewald method has already been developed for the fast summation of periodic \cite{Lindbo2011,Lindbo2012} as well as free space Stokes potentials \cite{Klinteberg2016}, and the singly periodic case would be a natural extension.


\section*{Acknowledgement}
This work has been supported by the the Swedish Research Council under grant no. 2011-3178 and by the Swedish e-Science Research Center. The authors gratefully acknowledge this support.


\begin{appendix}
\section{Direct sum evaluation}
\label{subsec:Incomp_bessel}
In this work, the direct sum will be used as a reference solution for the fast method. We will here comment on how to accurately evaluate the modified Bessel function of the second kind, ${\bf K_0}$, in \eqref{eq:ewald1p_fourier_complete} and the exponential integral in \eqref{eq:ewald1p_zero}.

The computation of ${\bf K_0}$ have been the subject of many articles and different approaches have been proposed, e.g.,\ \cite{Harris2009a,Slevinsky2010}. Many numerical subtleties arise while evaluating the function, specifically when the arguments are extremely small. Here we present an accurate approach to evaluate this function, which however is not the most efficient way to compute the modified Bessel function. The reader may consult \cite{Harris2009a} and \cite{Slevinsky2010} for more details. This approach is based on splitting the integration interval and applying Gauss-Legendre quadrature on each interval. If $a>b$, we set $v=\min(\sqrt{b},1)$ and compute the integral on $[0,v]$ and $[v,1]$. If $a<b$, we use equality
$${\bf K_0}(a,b)=2k_0(2\sqrt{ab})-{\bf K_0}(b,a),$$
where $k_0(\cdot)$ is the modified Bessel function of the second kind and is available e.g\ as \textsf{besselk} in MATLAB and GNU Scientific Library (GSL) \cite{gsl}. We set $v=\min(b\sqrt{\frac{b}{a}},1)$ and compute the integrals. With this approach, the absolute error is $\approx\pm10^{-15}$.

Moreover we need to compute the direct sum in \eqref{eq:ewald1p_zero}. We illustrate how to evaluate the exponential integral $\E(\cdot)$ accurately.

The general exponential integral is defined as \cite[Sec 6.3]{Press1987},
\begin{align}
\text{E}_n(x)=\int_1^{\infty}\dfrac{e^{-xt}}{t^n}\dif t, \quad x>0,\quad n\in\mathbb{N}\cup\lbrace0\rbrace.
\end{align}
This function can be represented with a continued fraction for $x\gtrsim 1$ as
\begin{align}
\text{E}_n(x) = e^{-x}\left( \dfrac{1}{x+n-}~\dfrac{1\cdot n}{x+n+2-}~\dfrac{2(n+1)}{x+n+4-}\cdots\right), \quad x\gtrsim 1,
\label{eq:expint_n_fraction}
\end{align}
and a series representation for $0<x<1$ as
\begin{align}
\text{E}_n(x)=\dfrac{~~~(-x)^{n-1}}{(n-1)!}[-\log(x)+\psi(n)]-\sum^{\infty}_{\subindex{m=0}{m\neq n-1}}\dfrac{(-x)^m}{(m-n+1)!},\quad 0<x<1,
\label{eq:expint_n_series_rep}
\end{align}
where 
\begin{align*}
\psi(1)=-\gamma,\quad \psi(n) = -\gamma+\sum_{m=1}^{n-1}\dfrac{1}{m}.
\end{align*}
For the case $n=1$, equation \eqref{eq:expint_n_series_rep} can be written as
\begin{align}
\E(x) = -\gamma-\log(x)-\sum_{m=1}^ {\infty}(-1)^m\dfrac{x^m}{m!m}.
\label{eq:expint_1_series_rep}
\end{align}
It follows from equation \eqref{eq:expint_1_series_rep} that
\begin{align}
\lim_{x\to0}\left\lbrace \gamma+\log(x)+\E(x)\right\rbrace=0.
\label{eq:limit}
\end{align}
Due to the charge neutrality condition $\sum_{\ni=1}^Nq_{\ni}\gamma=0$, but the present form of $\varphi^{\F,k_3=0}$ in equation \eqref{eq:ewald1p_zero} with extra $\gamma$ is more convenient to compute. Also equation \eqref{eq:limit} shows that \eqref{eq:k0_term} is valid for small values of $\rho_{\mi\ni}^2\xi^2$ and the expression $\gamma+\log(\cdot)+\E(\cdot)$ can be computed via the truncated version of the series in \eqref{eq:expint_1_series_rep}. Another approach is to compute the exponential function using Chebyshev interpolation. For this, the Chebyshev coefficients are evaluated and stored for different sets of input arguments. These coefficients can then be used to evaluate the exponential integral. The computation of the exponential integral can be accelerated by considering the fact that for $x>34$, $\E(x)$ is less than machine precision. A fast implementation of this algorithm is available in GSL Library.


\section{Proof of Theorem \ref{theo:spectral_accuracy}}
\label{appendix:proof}
Before presenting the proof of the theorem \ref{theo:spectral_accuracy}, we first review the method of contour integrals proposed by Donaldson and Elliot \cite{Donaldson1972} to derive an accurate estimate for the trapezoidal quadrature error. The reader may also consult the valuable survey by Trefethen and Weideman \cite{Trefethen2014}. Since in theorem \ref{theo:spectral_accuracy}, the integrand is defined on the real line, we will focus on integrals of the type
\begin{align}
I = \int_{\mathbb{R}} f(x)\dif x.
\end{align}
Consider the following two elementary definitions.
\begin{definition}
The \textit{residue} of a function $f$ at an order $m$ pole, $z_0$, is denoted by $\res[f,z_0]$ and defined as
\begin{align*}
\res[f,z_0] = \dfrac{1}{(m-1)!}\lim_{z\to z_0} \dfrac{\text{d}^{m-1}}{\dif z^{m-1}}(z-z_0)^mf(z).
\end{align*}
\end{definition}
\begin{definition}
A \textit{meromorphic} function is a single-valued function that is analytic everywhere except at a finite set of poles.
\end{definition}
Let $f(x)$ be a smooth function which decays at infinity and define a $(2n+1)$-point
trapezoidal rule by
\begin{align*}
I_{n,h} = h\sum_{j=-n}^{n} f(jh),\quad h>0.
\end{align*}
We define the remainder function as 
\begin{align*}
R_{n,h} := I-I_{n,h}.
\end{align*}
Now assume that $\C$ is a contour enclosing the interval $[-nh,nh]$ on which $f$ is analytic. The main principle of using contour integrals to estimate the quadrature error is as follows: Consider a meromorphic function $\psi_h(z)$ with simple poles at quadrature points $z_j=jh$, $j\in\mathbb{Z}$ for which $\res[\psi_h(z),z_j]=\frac{h}{2\pi \ii}$.
Therefore
\begin{align}
I_{n,h}= h\sum_{j=-n}^{n} f(jh) = \int_\C f(z)\psi_h(z)\dif z.
\label{eq:contour_In}
\end{align}
Moreover, there exist a meromorphic function $\phi(z)$ such that 
\begin{align}
I = \int_\C f(z)\phi(z)\dif z.
\label{eq:contour_I}
\end{align}
Hence the remainder of the trapezoidal rule can be written as
\begin{align}
R_{n,h} = \int_\C f(z)m(z)\dif z = \int_\C f(z)(\phi(z)-\psi_h(z))\dif z.
\label{eq:rnh}
\end{align}
In fact, this technique can be used for any quadrature rule by defining an appropriate characteristic function $\psi_h(z)$ that has simple poles at quadrature nodes inside the contour and residues equal to the quadrature weights divided by $2\pi \ii$. It can be shown that for a function $f$ defined on the real line, 
\begin{align}
\phi(z) = \left\lbrace
\begin{array}{rc}
\frac{1}{2}, & \im(z)<0, \\
\vspace{-.2cm}\\
-\frac{1}{2}, & \im(z)>0,
\end{array}
\right.
\label{eq:charact_phi}
\end{align}
and 
\begin{align}
\psi_h(z) = -\dfrac{\ii}{2}\cot\left(\dfrac{\pi z}{h}\right),
\label{eq:charact_psi}
\end{align}
satisfy \eqref{eq:contour_In} and \eqref{eq:contour_I} .
Hence, $m(z)$ in \eqref{eq:rnh} can be written as
\begin{align}
m(z) &= \left\lbrace
\begin{array}{rc}
\frac{1}{2}(1+\ii\cot(t)), & \im(z)<0, \\
\vspace{-.2cm}\\
-\frac{1}{2}(1-\ii\cot(t)), & \im(z)>0,
\end{array}
\right. \\
&=\left\lbrace
\begin{array}{rc}
\dfrac{1}{1-e^{2\ii t}}, & \im(z)<0, \\
\vspace{-.2cm}\\
\dfrac{-1}{1-e^{-2\ii t}}, & \im(z)>0,
\end{array}
\right.
\label{eq:charct_diff}
\end{align}
where $t=\dfrac{\pi z}{h}$.

If $f$ has poles at $z_j$, the contour $\C$ can be modified such that each pole is enclosed by a circle. Now in the limit when the radius of circles go to zero, we have,
\begin{align*}
R_{n,h}  = \int_\C f(z)m(z)\dif z-2\pi \ii\sum_{z_j}\res[f(z)m(z),z_j].
\end{align*}
Now we are ready to present the proof of Theorem \ref{theo:spectral_accuracy}.
\begin{proof}{(\it Proof of Theorem \ref{theo:spectral_accuracy})}
Suppose that the integral
\begin{align}
I=\int_{\mathbb{R}} \dfrac{e^{-\alpha(k^2+k_3^2)}}{k^2+k_3^2}\dif k,\quad k_3>0,
\label{eq:proof_int}
\end{align}
is approximated using a $(2n+1)$-point trapezoidal quadrature 
\begin{align*}
I_{n,h} = h\sum_{j=-n}^nf(jh),
\end{align*}
and define the remainder $R_{n,h}:=I-I_{n,h}$. Also in the limit when $n\to\infty$, we define $(I_h,R_h) = \lim_{n\to\infty} (I_{n,h},R_{n,h})$. We aim to derive an accurate error estimate for $R_h$ using the residue calculus. Consider the integrand in \eqref{eq:proof_int} in the complex plane,
\begin{align*}
f:\mathbb{C}\to\mathbb{C}, \quad f(z) = \dfrac{e^{-\alpha(z^2+k_3^2)}}{z^2+k_3^2},
\end{align*}
Clearly, $f$ has two simple poles at $z_j=\pm \ii k_3$. An appropriate choice of the contour $\C$ is a positively oriented rectangle that encloses the interval $[-nh,nh]$ and the poles $z_j$. More specifically consider a contour that passes through $(n+\frac{1}{2})h\pm \ii a$ and $-(n+\frac{1}{2})h\pm \ii a$ points, where $a>k_3$, see figure \ref{fig:contour}. 
\begin{figure}[htbp]
  \centering \includegraphics[width=0.49\linewidth]{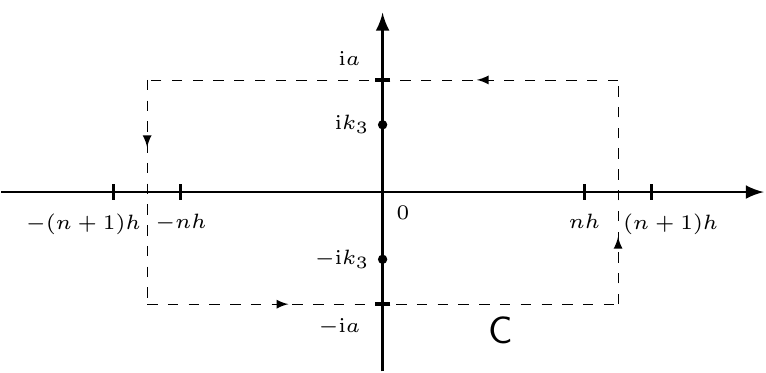}
  \caption{}
  \label{fig:contour}
\end{figure}
Since $f(z)m(z)$ decays at infinity, therefore $\int_{\C}\to0$ as $\C$ goes to infinity. Hence $R_{h}$ can be determined solely by the residues as
\begin{align*}
R_h  = -2\pi \ii\sum_{z_j}\res[f(z)m(z),z_j],
\end{align*}
where $m(z)$ is defined as in \eqref{eq:charct_diff}. Therefore,
\begin{align*}
|R_h|  = \dfrac{2\pi}{k_3}\dfrac{1}{e^{2\pi k_3/h}-1},
\end{align*}
\end{proof}
\end{appendix}

\bibliographystyle{abbrvnat_mod}
\bibliography{library}
\end{document}